\newtheorem{theorem}{Theorem}
\newtheorem{proposition}[theorem]{Proposition}
\newtheorem{conjecture}[theorem]{Conjecture}
\newtheorem{lemma}[theorem]{Lemma}
\newtheorem{corollary}[theorem]{Corollary}
\newcommand{\PP}{\mathbb{P}}
\newcommand{\N}{\mathbb{N}}
\newcommand{\R}{\mathbb{R}}
\newcommand{\argmin}{\operatorname*{arg\,min}}
\newcommand{\conv}{\textup{conv}}
\renewcommand{\indent}{\hspace*{24pt}\ignorespaces}
\let\oldhref\href
\renewcommand{\href}[2]{\oldhref{#1}{\bfseries#2}}
\begin{document}
\title{The distribution of the number of distinct values in a finite exchangeable sequence}
\author{Theodore Zhu\\ University of California, Berkeley}
\date{\today}

\maketitle

\begin{abstract}
\noindent Let $K_n$ denote the number of distinct values among the first $n$ terms of an infinite exchangeable sequence of random variables $(X_1,X_2,\ldots)$. We prove for $n=3$ that the extreme points of the convex set of all possible laws of $K_3$ are those derived from i.i.d. sampling from discrete uniform distributions and the limit case with $\PP(K_3=3)=1$, and offer a conjecture for larger $n$. We also consider variants of the problem for finite exchangeable sequences and exchangeable random partitions.
\end{abstract}

\section{Introduction}

For an infinite sequence of real-valued random variables $(X_1,X_2,\ldots)$, let
\begin{equation}
K_n=K_n(X_1,\ldots,X_n):=\#\{X_i:1\leq i\leq n\},
\end{equation}
the number of distinct values appearing in the first $n$ terms. This article focuses on the case in which the sequence $(X_1,X_2,\ldots)$ is \textit{exchangeable}, meaning that its distribution is invariant under finite permutations of the indices. It is a well-known and celebrated result of de Finetti that any infinite exchangeable sequence is a mixture of i.i.d. sequences.
We explore ideas related to the following central question:
\begin{quote}Given a probability distribution $(a_1,\ldots,a_n)$ on $[n]:=\{1,\ldots,n\}$, is there an infinite exchangeable sequence of random variables $(X_1,X_2,\ldots)$ such that $\PP(K_n=k)=a_k$ for $1\leq k\leq n$?\end{quote}
The functional $K_n$ has been studied extensively in the context of the \textit{occupancy problem} as well as other closely related formulations including the birthday problem, the coupon collector's problem, and random partition structures \cite{MR0228020,MR0216548,MR2245368}. Much of the literature pertains to the asymptotic behavior of $K_n$ in the classical version in which the $X_i$ are i.i.d. discrete uniform random variables, as well as the general i.i.d. case. See \cite{MR2318403} for a recent survey with many references. Asymptotics of $K_n$ have also been studied for a random walk $(X_1,X_2,\ldots)$ with stationary increments \cite{MR0388547},\cite[Section 7.3]{MR2722836}. \\

Let us first consider the problem for small values of $n$. For $n=1$, the random variable $K_1$ is just the constant $1$. Next, it is easy to see that any probability distribution on $\{1,2\}$ can be achieved as the law of $K_2$ for some exchangeable sequence; indeed, for any $a\in[0,1]$, i.i.d. sampling from a distribution with a single atom having weight $\sqrt{a}$ yields $\PP(K_2=1)=a$. However, the problem is not trivial for $n=3$, as evident by the following bound due to Jim Pitman (proof in Section 3.)

\begin{proposition} \label{ofbound}
For $K_3$ the number of distinct values in the first $3$ terms of an infinite exchangeable sequence of random variables $(X_1,X_2,\ldots)$, 
\begin{equation}
\PP(K_3=2)\leq\frac{3}{4}.
\end{equation}
\end{proposition}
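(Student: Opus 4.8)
The plan is to use de Finetti's theorem to reduce the problem to a deterministic inequality about a single i.i.d.\ sampling distribution, and then bound the resulting expression term by term. By de Finetti, there is a random probability measure $\mu$ on $\R$ such that, conditionally on $\mu$, the variables $X_1,X_2,\ldots$ are i.i.d.\ with common law $\mu$; hence $\PP(K_3=2)=\EE[\,g(\mu)\,]$, where $g(\mu):=\PP(K_3=2\mid\mu)$ is the probability of seeing exactly two distinct values among three i.i.d.\ draws from $\mu$. Since a mixture of quantities each bounded by $3/4$ is itself bounded by $3/4$, it suffices to prove $g(\mu)\leq 3/4$ for every fixed probability measure $\mu$.

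First I would compute $g(\mu)$ explicitly. Writing $p_1,p_2,\ldots$ for the masses of the atoms of $\mu$, I observe that a repeated value among three draws must be one of these atoms (the continuous part of $\mu$ produces almost surely distinct values), and that the event $\{K_3=2\}$ decomposes according to which atom is the duplicated one. For a fixed atom $j$, the event that exactly two of the three draws equal atom $j$ while the remaining draw is different has probability $\binom{3}{2}p_j^2(1-p_j)$; these events are disjoint across $j$ because the duplicated value is uniquely determined when exactly two distinct values occur. Summing gives $g(\mu)=3\sum_j p_j^2(1-p_j)$.

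It then remains to show $\sum_j p_j^2(1-p_j)\leq 1/4$ whenever $p_j\geq 0$ and $\sum_j p_j\leq 1$. The key observation is the elementary bound $p(1-p)\leq 1/4$ on $[0,1]$, which yields $p_j^2(1-p_j)=p_j\cdot p_j(1-p_j)\leq \tfrac14 p_j$ for each $j$. Summing over $j$ and using $\sum_j p_j\leq 1$ gives $\sum_j p_j^2(1-p_j)\leq \tfrac14$, hence $g(\mu)\leq 3/4$, and the proposition follows upon integrating against the de Finetti mixing measure.

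I do not expect a serious obstacle; the only point requiring care is the bookkeeping in the second step, namely verifying that the decomposition of $\{K_3=2\}$ by the duplicated atom is exact (in particular that the continuous mass and the non-duplicated atoms are correctly absorbed into the factor $1-p_j$). Tracking equality, the bound $p(1-p)\leq 1/4$ is tight exactly at $p=1/2$, which forces two atoms of mass $1/2$ each, i.e.\ discrete uniform sampling on two values, recovering the extremal law with $\PP(K_3=2)=3/4$ and confirming that the constant is sharp.
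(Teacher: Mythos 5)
Your proof is correct, and while it opens the same way as the paper's, its key step is genuinely different and more elementary. Both arguments use de Finetti to reduce to the i.i.d.\ case and then decompose $\{K_3=2\}$ over the duplicated atom to get $\PP(K_3=2)=3\sum_j p_j^2(1-p_j)$, which is the paper's formula \eqref{q2}; your bookkeeping there (the duplicated value is a.s.\ an atom, the events are disjoint in $j$, the non-duplicated mass is absorbed into $1-p_j$) is exactly right. From that point you finish with the one-line term-by-term bound $p_j^2(1-p_j)=p_j\cdot p_j(1-p_j)\leq\tfrac{1}{4}p_j$, summed using $\sum_j p_j\leq 1$; this works directly for distributions with infinitely many atoms and immediately identifies the equality case (two atoms of mass $\tfrac12$). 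The paper instead first truncates to finitely supported distributions via a limiting argument, then applies the merging Lemma~\ref{merge}: with at least three atoms, replacing the two smallest atoms $a,b$ by $a+b$ changes $q_2$ by $3ab\bigl(2-3(a+b)\bigr)\geq 0$, so repeated merging reduces everything to the two-atom case, where calculus gives the maximum $\tfrac34$ at $p_1=p_2=\tfrac12$. For this single proposition your route is shorter and self-contained; what the paper's heavier route buys is machinery reused throughout: the merging idea is generalized to Lemma~\ref{genmerge}, which drives the proof of the main Theorem~\ref{mainthm}, and to Lemma~\ref{nmerge}, which yields $\PP(K_n=2)\leq 1-2^{-(n-1)}$ for all $n$ (Theorem~\ref{n2bound}). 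Note that your trick does not obviously extend to that $n\geq 4$ bound, since $q_{n,2}$ then contains cross terms $\sum_{i<j}\sum_{k}\binom{n}{k}p_i^k p_j^{n-k}$ rather than a single sum over atoms.
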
 

Here we present the main open problem and result of this article. Let $\boldsymbol{v}_{n,m}$ denote the law of $K_{n,m}:=K_n(X_{m,1},\ldots,X_{m,n})$ where $X_{m,i}$ are i.i.d. with uniform distribution on $m$ elements, i.e. 
\begin{equation}
\boldsymbol{v}_{n,m}=\big(\PP(K_{n,m}=k):1\leq k\leq n\big)
\end{equation} 
and let $\boldsymbol{v}_{n,\infty}=(0,\ldots,0,1)$, corresponding to the limit case $m=\infty$. Let
\begin{equation}
V_n:=\{\boldsymbol{v}_{n,m}:m=1,2,\ldots,\infty\}
\end{equation} 
and let $\conv(V_n)$ denote the convex hull of $V_n$.

\begin{conjecture} \label{mainconj} For $n\geq 3$,
\begin{enumerate}
\item[(i)] The set of extreme points of $\conv(V_n)$ is $V_n$.
\item[(ii)] The set of possible laws of $K_n$ for an infinite exchangeable sequence $(X_1,X_2,\ldots)$ is $\conv(V_n)$. 
\end{enumerate}
\end{conjecture}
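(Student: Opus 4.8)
\section*{Proof proposal}

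The plan is to establish the $n=3$ case, which is the theorem announced in the abstract; for $n\ge 4$ the statement is genuinely open, and I indicate at the end where the argument stalls. First I would use de Finetti's theorem to write $(X_1,X_2,\ldots)$ as an i.i.d.\ sample from a random directing measure $\mu$. Conditionally on $\mu$ the $X_i$ are i.i.d.\ and coincidences occur only at the atoms of $\mu$; writing $p_1,p_2,\ldots$ for the atom masses and $S_k=S_k(\mu):=\sum_i p_i^k$, the events $\{X_1=X_2\}$ and $\{X_1=X_2=X_3\}$ have conditional probabilities $S_2$ and $S_3$. A three-sample count then gives
\begin{equation}
\PP(K_3=1\mid\mu)=S_3,\qquad \PP(K_3=2\mid\mu)=3(S_2-S_3),\qquad \PP(K_3=3\mid\mu)=1-3S_2+2S_3 .
\end{equation}
Taking expectations, the law of $K_3$ is an \emph{injective affine} image of the single pair $(\sigma_2,\sigma_3):=(\EE S_2,\EE S_3)$, and as the directing measure ranges over all mixtures the attainable pairs fill out $\overline{\conv}(A)$, where $A:=\{(S_2(\mu),S_3(\mu)):\mu\}\subseteq[0,1]^2$. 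The whole problem thus collapses to a planar one: find the extreme points of $\overline{\conv}(A)$. Uniform sampling on $m$ points contributes the point $(1/m,1/m^2)$ and the case $m=\infty$ the point $(0,0)$; decisively, all of these lie on the strictly convex parabola $s_3=s_2^2$.

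Two inclusions then remain. The inclusion $\conv(V_3)\subseteq\{\text{attainable laws}\}$ is easy: a convex combination of the $\boldsymbol v_{3,m}$ is realized by the directing measure equal to the uniform law on $m$ points with the given weights. For the reverse inclusion I would verify that $\overline{\conv}(A)$ satisfies every supporting inequality of the target region. The upper support is immediate from $p_i^3\le p_i^2$, which gives $S_3\le S_2$ pointwise in $\mu$ and hence $\sigma_3\le\sigma_2$. The lower boundary of $\conv(V_3)$ is the polygonal arc through the parabola points, whose edges are the chords joining consecutive vertices $(1/m,1/m^2)$ and $(1/(m+1),1/(m+1)^2)$. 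Lying above every such chord is exactly the family of inequalities, one for each $m\ge1$,
\begin{equation}\label{chord}
m(m+1)S_3-(2m+1)S_2+1\ge 0 ,
\end{equation}
and, since \eqref{chord} is linear in $(S_2,S_3)$, it passes to the expectation $(\sigma_2,\sigma_3)$. Granting \eqref{chord} for every $\mu$, the set $\overline{\conv}(A)$ lies below $s_3=s_2$ and above all chords, i.e.\ inside the affine image of $\conv(V_3)$, which together with the easy inclusion yields part (ii).

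The crux is therefore the chord inequality \eqref{chord}. Homogenizing by $1=(\sum_i p_i)^3$ (one may assume $\sum_i p_i=1$, splitting any diffuse mass into vanishing atoms, since \eqref{chord} is closed), it becomes the symmetric cubic inequality $m(m+1)P_3-(2m+1)P_1P_2+P_1^3\ge0$ with $P_k:=\sum_i p_i^k$, $p_i\ge0$, which I would simplify via Newton's identity $P_1^3=3P_1P_2-2P_3+6e_3$ to
\begin{equation}
(m-1)\big[(m+2)P_3-2P_1P_2\big]+6e_3\ge0,\qquad e_3:=\sum_{i<j<k}p_ip_jp_k\ge0 .
\end{equation}
Here $6e_3\ge0$ is free, but the bracket is negative precisely in the spread-out regime where the tight case (the uniform law on $m+1$ points) lives, so the two terms must be balanced. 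To prove it I would fix $P_1=1$, write the left side of \eqref{chord} as $\sum_i\phi(p_i)+1$ with $\phi(p)=m(m+1)p^3-(2m+1)p^2$, and use that at a minimizer the stationarity condition $\phi'(p_i)=\lambda$ forces the atoms to take at most two distinct positive values ($\phi'$ being quadratic); the problem then reduces to a one-parameter calculus check showing the minimum equals $-1$, attained exactly at the uniform laws on $m$ and on $m+1$ points. This \emph{double} equality is what makes \eqref{chord} delicate---no single Jensen step can produce two distinct extremizers---so the main obstacle is carrying out this two-value analysis uniformly in $m$ (or, more elegantly, finding a direct Schur-type/SOS certificate for the simplified form). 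As a by-product the same pointwise-then-expectation method yields Proposition \ref{ofbound}: $\PP(K_3=2)=3(\sigma_2-\sigma_3)\le\tfrac34$ is the single extra supporting inequality $\sum_i p_i^2(1-p_i)\le\tfrac14$, tight at the uniform law on two points.

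Finally, part (i) for $n=3$ is then immediate: the points $(1/m,1/m^2)$ and $(0,0)$ lie on the strictly convex parabola $s_3=s_2^2$, hence are in strictly convex position, so each is an extreme point of their convex hull and there are no others; pulling back through the affine isomorphism shows $V_3$ is exactly the extreme-point set of $\conv(V_3)$. For $n\ge4$ the same de Finetti reduction expresses the law of $K_n$ as an affine image of $(\EE S_2,\ldots,\EE S_n)$, a point in $\R^{n-1}$, and the vertices $\boldsymbol v_{n,m}$ again lie on a moment-type curve; but one must then prove an entire higher-dimensional family of supporting inequalities (higher-degree symmetric polynomials with several simultaneous equality configurations) and rule out non-uniform extreme points, which is exactly the content that keeps the statement conjectural.
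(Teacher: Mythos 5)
Your overall architecture is sound and, modulo a change of coordinates, it is the same as the paper's: after de Finetti, the law of $K_3$ is an injective affine function of $\big(\EE\sum_i p_i^2,\ \EE\sum_i p_i^3\big)$; the inclusion $\conv(V_3)\subseteq\{\text{attainable laws}\}$ is the easy direction (mix over uniforms); and the reverse inclusion reduces to the family of chord inequalities, one for each $m\geq 1$,
\begin{equation*}
m(m+1)\sum_i p_i^3-(2m+1)\sum_i p_i^2+1\ \geq\ 0,
\end{equation*}
which is exactly the paper's Lemma \ref{linebounds} up to multiplication by $(2N+1)/3$ (with $m=N$). Your part (i) argument --- the points $(1/m,1/m^2)$ and $(0,0)$ lie on the strictly convex parabola $s_3=s_2^2$, so each is the unique minimizer over the set of a tangent-line functional and hence extreme, and extremality pulls back through the injective affine map --- is correct and arguably cleaner than the paper's verification that the chord slopes are monotone in $N$.

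The gap is that the chord inequality itself, which carries the entire difficulty of part (ii), is never proved. You reduce it (correctly) to a finite-dimensional problem: at a minimizer on the simplex, the stationarity condition $\phi'(p_i)=\lambda$ with $\phi(p)=m(m+1)p^3-(2m+1)p^2$ forces at most two distinct positive atom values. But you then only assert that a ``one-parameter calculus check'' would show the minimum is $-1$, attained exactly at the uniform laws on $m$ and $m+1$ points, and you explicitly defer that check, calling it ``the main obstacle.'' It is also not one-parameter: the two values carry multiplicities $(j,k)$, so one must run the minimization uniformly over $j$, $k$, and $m$, and the presence of two distinct equality configurations means no single convexity or Jensen step can close it --- as you yourself observe. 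This is precisely the step the paper engineers around with its merging lemma (Lemma \ref{genmerge}): merging two atoms changes $L_N$ by $3p_1p_2\big[(p_1+p_2)f(N)-2\big]$, whose sign depends only on the sum $p_1+p_2$; this drives the three-case argument of Lemma \ref{onlyunif} (merge when the sum is small, replace by the average when it is large, invoke a third atom in the boundary case $p_1+p_2=2/f(N)$), which eliminates all non-uniform candidates, and Lemma \ref{buniform} then finishes by comparing the uniforms. Your Lagrange/two-value route could plausibly be completed, and your Newton-identity reformulation $(m-1)\big[(m+2)P_3-2P_1P_2\big]+6e_3\geq 0$ is an attractive target for an algebraic certificate, but as written the proposal stops exactly where the real work begins, so it does not constitute a proof of the $n=3$ case (and for $n\geq 4$ the statement remains open, consistent with the paper).
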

\vspace{0.3cm}
\begin{theorem} \label{mainthm}
Assertions $(i)$ and $(ii)$ are true for $n=3$.
\end{theorem}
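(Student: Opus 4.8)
The plan is to pass through de Finetti's theorem and turn the problem into a two-dimensional moment problem. Writing the directing random measure as $\mu$ with atom masses $(p_k)$ and the samples conditionally i.i.d.\ given $\mu$, a direct count (two samples coincide only by landing on a common atom, so the continuous part of $\mu$ contributes only distinct values) gives, conditionally on $\mu$,
$$\PP(K_3=1\mid\mu)=\sum_k p_k^3,\qquad \PP(K_3=2\mid\mu)=3\Big(\sum_k p_k^2-\sum_k p_k^3\Big),$$
and $\PP(K_3=3\mid\mu)=1-3\sum_k p_k^2+2\sum_k p_k^3$. Hence the law of $K_3$ depends on $\mu$ only through $(S_2,S_3):=(\sum_k p_k^2,\sum_k p_k^3)$, and after mixing it depends only on $(s_2,s_3):=(\EE S_2,\EE S_3)$ through the same affine formulas. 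The map $(s_2,s_3)\mapsto\text{law of }K_3$ is injective and affine, so the set of achievable laws is the affine image of the set of achievable $(s_2,s_3)$, which is the closed convex hull of $\mathcal D:=\{(\sum_k p_k^2,\sum_k p_k^3):p_k\ge 0,\ \sum_k p_k\le 1\}$. A short computation identifies $\boldsymbol v_{3,m}$ with the point $(1/m,1/m^2)$ and $\boldsymbol v_{3,\infty}$ with $(0,0)$; all of these lie on the parabola $y=x^2$.

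For part $(i)$ I would use that $x\mapsto x^2$ is strictly convex, so the points $\{(1/m,1/m^2):m\ge 1\}\cup\{(0,0)\}$ are in convex position and are therefore exactly the extreme points of their convex hull. Since the affine map above is an isomorphism onto its image, it carries extreme points to extreme points, giving that the extreme points of $\conv(V_3)$ are precisely $V_3$.

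For part $(ii)$ it suffices to prove $\conv(\mathcal D)=\conv(V_3)$. The inclusion $\supseteq$ is immediate since each uniform configuration and the limit case lies in $\mathcal D$. For $\subseteq$ I would show $\mathcal D\subseteq\conv(V_3)$; as the parabola points sit on a convex curve, $\conv(V_3)$ is exactly the region below the chord from $(0,0)$ to $(1,1)$ and above every secant line $L_m$ joining $(1/(m+1),1/(m+1)^2)$ to $(1/m,1/m^2)$. The upper bound $S_3\le S_2$ is trivial since $p_k^3\le p_k^2$. The content is the family of lower bounds: for every $m\ge 1$ and every admissible $(p_k)$,
$$\sum_k p_k^3\ \ge\ L_m\Big(\sum_k p_k^2\Big)=\frac{(2m+1)\sum_k p_k^2-1}{m(m+1)},$$
and because $L_m$ is the highest secant at abscissa $\sum_k p_k^2$ exactly when $\sum_k p_k^2\in[\tfrac1{m+1},\tfrac1m]$, it is enough to treat that single value of $m$. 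Equivalently, with $H_m(p)=(2m+1)p^2-m(m+1)p^3$, the claim is the clean inequality $\sum_k H_m(p_k)\le 1$ whenever $\sum_k p_k\le 1$.

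This last inequality is where I expect the real work. A pointwise bound fails: from the factorization $m(m+1)p^3-(2m+1)p^2+p=m(m+1)\,p\,(p-\tfrac1m)(p-\tfrac1{m+1})$, which is negative precisely for $p\in(\tfrac1{m+1},\tfrac1m)$, atoms in that middle range individually violate the desired linear bound and must be compensated using the global budget $\sum_k p_k\le 1$. My plan is to maximize $\sum_k H_m(p_k)$ subject to $\sum_k p_k\le 1$ by a smoothing/Lagrange argument: since $H_m'$ is quadratic, the stationarity condition $H_m'(p_i)=H_m'(p_j)$ forces a maximizer to use at most two distinct positive atom sizes (continuous mass contributes $0$ and only relaxes the budget). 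The problem then collapses to a finite optimization over two values with prescribed multiplicities, where a direct check shows the maximum equals $1$, attained exactly at the uniform configurations on $m$ and on $m+1$ atoms, i.e.\ the two endpoints of $L_m$. Making the two-value reduction rigorous and controlling the integer multiplicities near the boundary is the main obstacle; the rest is bookkeeping.
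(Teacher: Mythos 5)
Your reduction is sound, and it is in fact the paper's own reduction written in different coordinates: the affine bijection between your moment pair $(s_2,s_3)$ and the paper's parametrization $(q_1,q_3)$ turns your parabola picture for part (i) into exactly the slope-monotonicity computation \eqref{Nslope}, and your target inequality $\sum_k H_m(p_k)\le 1$, where $H_m(p)=(2m+1)p^2-m(m+1)p^3$, is precisely Lemma \ref{linebounds}: since $L_m(\boldsymbol{p})=1-\tfrac{3}{2m+1}\sum_k H_m(p_k)$, the bound $L_m(\boldsymbol{p})\ge\tfrac{2m-2}{2m+1}$ rearranges to $\sum_k H_m(p_k)\le 1$. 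Part (i) as you present it is complete and correct, and your observation that the pointwise bound $H_m(p)\le p$ fails on $(\tfrac{1}{m+1},\tfrac1m)$ correctly identifies why the inequality has real content.

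The gap is in part (ii): the inequality $\sum_k H_m(p_k)\le 1$ \emph{is} the theorem, and you do not prove it. Your plan --- existence of a maximizer, Lagrange stationarity forcing at most two distinct positive atom sizes (since $H_m'$ is quadratic), then a ``direct check'' over two-valued configurations --- stalls exactly where you admit it does. Two concrete problems: (a) existence of a maximizer over configurations with unboundedly many atoms needs an argument (fixable by truncating to finitely many atoms and working on a compact simplex); (b) far more seriously, first-order conditions leave a whole family of genuinely two-valued critical configurations $ax+by=1$, $H_m'(x)=H_m'(y)$, indexed by integer multiplicities $(a,b)$, and excluding all of these is not ``bookkeeping'' --- it is the entire difficulty of the problem. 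The paper's way past this is the merging lemma (Lemma \ref{genmerge}): merging two atoms $x,y$ changes the objective by $(2m+1)xy\big[2-(x+y)f(m)\big]$ with $f(m)=\tfrac{3m(m+1)}{2m+1}$, a sign determined \emph{only} by $x+y$. Hence any configuration with two distinct positive atoms is strictly improved either by merging them (when $x+y<2/f(m)$) or by replacing both with their average (when $x+y>2/f(m)$), the borderline case being handled via a third atom (Lemma \ref{onlyunif}). This kills every non-uniform configuration globally, with no stationarity or multiplicity analysis; one then finishes on uniforms, where $a\,H_m(1/a)=\tfrac{2m+1}{a}-\tfrac{m(m+1)}{a^2}$ is unimodal in $a$ and equals $1$ exactly at $a=m$ and $a=m+1$. (Incidentally, the second-order conditions in your Lagrange setup force $x+y\ge 2/f(m)$ for every pair of positive atoms at a maximizer --- the same threshold --- which is a strong hint that merge-or-average is the mechanism your sketch is missing.) Until you supply an argument of this strength for (b), part (ii) remains unproven.
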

\vspace{0.3cm}

The rest of this article is organized as follows. Section \ref{prelims} establishes notation and the fundamentals of our approach. Section \ref{k3main} covers some properties of the law of $K_3$ leading to a proof of Theorem \ref{mainthm}, and Section \ref{highdim} aims to extend some of these results to $K_n$ for larger $n$. Section \ref{finitesection} considers a variant of the main problem for finite exchangeable sequences by appealing to the framework of exchangeable random partitions, and Section \ref{twoparamsection} explores a remarkable symmetry for $K_3$ in the Ewens-Pitman two-parameter partition model.

\section{Preliminaries} \label{prelims}
For an i.i.d. sequence $(X_1,X_2,\ldots)$, there is an associated \textit{ranked discrete distribution} $(p_1,p_2,\ldots)$ with $p_1\geq p_2\geq\ldots\geq 0$ and $\sum_{i=1}^\infty p_i\leq 1$ where the $p_i$ are the weights of the atoms for the law of $X_i$ in decreasing order, and $1-\sum_{i=1}^\infty p_i$ is the weight of the continuous component.\\

Consider the set
\begin{equation}
\nabla_\infty:=\Big\{(p_1,p_2,\ldots):p_1\geq p_2\geq\ldots\geq 0,\sum_{i=1}^\infty p_i\leq 1\Big\},
\end{equation}
sometimes referred to as the infinite dimensional 
\textit{Kingman simplex} as in \cite{MR2596654}. The uniform distribution on $m$ elements corresponds to
\begin{equation}
\boldsymbol{u}_m:=\Big(\underbrace{\frac{1}{m},\ldots,\frac{1}{m}}_{m\text{ times}},0,0,\ldots\Big)\in\nabla_\infty.
\end{equation}
and any non-atomic law corresponds to $\boldsymbol{u}_\infty:=(0,0,\ldots)\in\nabla_\infty$. With Conjecture \ref{mainconj} and Theorem \ref{mainthm} in mind, note that
\begin{equation}
\big\{\boldsymbol{u}_m:m=1,2,\ldots,\infty\big\}
\end{equation}
is precisely the set of extreme points of $\nabla_\infty$ \cite[Theorem 4.1]{MR954608}. Any $(p_1,p_2,\ldots)\in\nabla_\infty$ has a unique representation as a convex combination of $\boldsymbol{u}_m$, $m=1,2,\ldots,\infty$ given by 
\begin{equation}
(p_1,p_2,\ldots)=p_*\boldsymbol{u}_\infty+\sum_{i=1}^\infty(p_i-p_{i+1})\boldsymbol{u}_i,\qquad p_*=1-\sum_{i=1}^\infty p_i.
\end{equation}
This is a discrete version of Khintchine's representation theorem for unimodal distributions \cite{khintchine1938unimodal}.\\ 

It is easy to see that the law of $K_n$ for an i.i.d sequence depends only on the ranked frequencies of the atoms. For example, let
\begin{equation}
q_{n,i}(p_1,p_2,\ldots):=\PP\big(K_n=i\big)
\end{equation}
where $K_n=K_n(X_1,\ldots,X_n)$ for i.i.d. $X_i$ with ranked frequencies $(p_1,p_2,\ldots)$.  
Then for $n=3$,
\begin{align}
q_{3,1}(p_1,p_2,\ldots)&=\sum_{i=1}^\infty p_i^3 \label{q1}\\
q_{3,2}(p_1,p_2,\ldots)&=\sum_{i=1}^\infty 3p_i^2(1-p_i) \label{q2}\\
q_{3,3}(p_1,p_2,\ldots)&=1-\sum_{i=1}^\infty\big[3p_i^2-2p_i^3\big].
\end{align} 

For the general exchangeable case, de Finetti's theorem guarantees that the law of $K_n$ for an exchangeable sequence of random variables $(X_1,X_2,\ldots)$ is a convex combination of laws of $K_n$ for i.i.d. sequences. This property allows us to focus on the i.i.d. case and the simplification to ranked discrete distributions. \\

Note that there is an equivalent reformulation of the problem in the setting of exchangeable random partitions; see e.g. \cite{MR2245368} for relevant background on the subject. For an exchangeable random partition $\Pi=(\Pi_n)$ of $\N$, let $K_n$ denote the number of \textit{clusters} in the restriction $\Pi_n$ of $\Pi$ to $[n]$. Through \textit{Kingman's respresentation theorem} \cite{MR509954} for exchangeable random partitions of $\N$ in terms of random ranked discrete distributions, the possible laws of $K_n$ in this setting are identical to the possible laws of $K_n$ as defined originally in this paper as the number of distinct values in the first $n$ terms of an exchangeable sequence $(X_1,X_2,\ldots)$. In Sections 5 -- 7, we explore some related problems in the framework of exchangeable random partitions.\\\\\\
\textbf{Notations and conventions.} If a ranked discrete distribution $(p_1,p_2,\ldots)$ has finitely many atoms, i.e. there exists $m$ such that $p_i=0$ for all $i>m$, we call it a \textit{finite} distribution and abbreviate it as $(p_1,\ldots,p_m)$ when convenient. Since all of the functionals that we work with on $\nabla_\infty$ are symmetric functions of the arguments, we understand an equivalence between an unordered discrete distribution $(p_1,p_2,\ldots)$ and its ranked version. Unless otherwise stated, it is implicit in the appearance of $(p_1,p_2,\ldots)$ or $(p_1,\ldots,p_m)$ that the conditions $p_i\geq 0$ and $\sum p_i\leq 1$ hold.

\section{Laws of $K_3$} \label{k3main}
To simplify notation in this section, let
\begin{equation} 
q_i:=q_{3,i}=\PP(K_3=i)
\end{equation}
where $q_i$ may be treated as a functional on $\nabla_\infty$.

\begin{lemma} \label{merge}
For $(p_1,\ldots,p_m)$ with $m\geq 3$ and $p_1\leq\ldots\leq p_m$, 
\begin{equation}
q_2(p_1+p_2,p_3\ldots,p_m)\geq q_2(p_1,p_2,p_3,\ldots,p_m).
\end{equation}
\end{lemma}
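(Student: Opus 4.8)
The plan is to reduce the claimed inequality to a single pointwise estimate on the summand appearing in \eqref{q2}. Setting $g(p):=3p^2(1-p)$, that formula reads $q_2(p_1,p_2,\ldots)=\sum_i g(p_i)$. Replacing the pair $p_1,p_2$ by the single atom $p_1+p_2$ leaves every term with index $\geq 3$ untouched, so the difference of the two sides collapses to
\begin{equation*}
q_2(p_1+p_2,p_3,\ldots,p_m)-q_2(p_1,\ldots,p_m)=g(p_1+p_2)-g(p_1)-g(p_2).
\end{equation*}
Thus it suffices to show that $g$ is superadditive at the particular pair $(p_1,p_2)$.

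Next I would expand this difference directly. Writing $a=p_1$, $b=p_2$ and using $g(p)=3p^2-3p^3$, a short computation cancels the quadratic and pure-cubic contributions and leaves
\begin{equation*}
g(a+b)-g(a)-g(b)=6ab-9ab(a+b)=3ab\big(2-3(a+b)\big).
\end{equation*}
Since $a,b\geq 0$, the sign of the right-hand side is governed entirely by the factor $2-3(a+b)$, so the desired inequality holds precisely when $p_1+p_2\leq\tfrac{2}{3}$.

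The one genuinely substantive step is to verify that the hypotheses force $p_1+p_2\leq\tfrac{2}{3}$; note that $g$ is \emph{not} superadditive for large arguments, so this bound is exactly where the assumptions $m\geq 3$ and $p_1\leq p_2\leq p_3$ enter. Because $p_1$ and $p_2$ are the two smallest frequencies, both are at most $p_3$, giving $p_1+p_2\leq 2p_3$ and hence $\tfrac{3}{2}(p_1+p_2)\leq p_1+p_2+p_3$. Combined with the standing constraint $p_1+p_2+p_3\leq\sum_i p_i\leq 1$ (which requires at least three atoms to be available), this yields $p_1+p_2\leq\tfrac{2}{3}$, completing the argument. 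I expect the only place to exercise care is confirming that it is the ordering hypothesis that supplies this bound, since the same inequality would fail for the two \emph{largest} atoms of a distribution concentrated near a single point.
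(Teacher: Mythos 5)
Your proof is correct and follows essentially the same route as the paper: both reduce the difference to the identity $g(p_1+p_2)-g(p_1)-g(p_2)=3p_1p_2\bigl(2-3(p_1+p_2)\bigr)$ and then use the ordering hypothesis together with $\sum_i p_i\leq 1$ to conclude $p_1+p_2\leq\tfrac{2}{3}$. The only cosmetic difference is that the paper derives $p_1+p_2\leq\tfrac{2}{m}\leq\tfrac{2}{3}$ from the full $m$-term average, while you use only the three smallest atoms; both arguments are equally valid.
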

\begin{proof}
Let $a=p_1$ and $b=p_2$. We have
\begin{equation}
q_2(a,b,p_3,\ldots,p_m)=3a^2(1-a)+3b^2(1-b)+\sum_{i=1}^m 3p_i^2(1-p_i)
\end{equation}
and
\begin{equation}
q_2(a+b,p_3,\ldots,p_m)=3(a+b)^2(1-a-b)+\sum_{i=1}^m 3p_i^2(1-p_i).
\end{equation}
Then 
\begin{align}
q_2(a+b,p_3,\ldots,p_m)-q_2(a,b,p_3,\ldots,p_m)&=3(a+b)^2(1-a-b)-3a^2(1-a)-3b^2(1-b)\\ 
&=6ab(1-a-b)-3a^2b-3ab^2\\
&=3ab(2-3(a+b))\\
&\geq 0
\end{align}
since $a$ and $b$ are the two smallest values among $\{a,b,p_3,\ldots,p_m\}$ so $a+b\leq\frac{2}{m}\leq\frac{2}{3}$ for $m\geq 3$.\\
\end{proof}
This shows that for any $(p_1,\ldots,p_m)$ with $m\geq 3$, merging the two smallest values among $\{p_1,\ldots,p_m\}$ does not decrease $q_2$.\\

\begin{proof}[Proof of Proposition \ref{ofbound}]
By convexity, it suffices to prove the inequality for i.i.d. sequences. Since 
\begin{equation}
q_2(p_1,p_2,\ldots)=\sum_{i=1}^\infty 3p_i^2(1-p_i)=\lim_{m\rightarrow\infty}\sum_{i=1}^m 3p_i^2(1-p_i)=\lim_{m\rightarrow\infty}q_2(p_1,\ldots,p_m), 
\end{equation}
it is enough to establish the inequality $q_2(p_1,\ldots,p_m)\leq\frac{3}{4}$ for finite discrete distributions $(p_1,\ldots,p_m)$. If $m=2$, then $q_2(p_1,p_2)=3p_1^2(1-p_1)+3p_2^2(1-p_2)$ which attains its maximum value of $\frac{3}{4}$ subject to $p_1,p_2\geq 0$ and $p_1+p_2\leq 1$ at $p_1=p_2=\frac{1}{2}$. For $m\geq 3$, by Lemma \ref{merge} repeatedly merging the two smallest values until no more than two nonzero values remain gives $q_2(p_1,\ldots,p_m)\leq q_2(\frac{1}{2},\frac{1}{2})=\frac{3}{4}$.\\
\end{proof}

Consider the law of $K_3$ for an i.i.d. sequence $(X_1,X_2,\ldots)$ where each $X_i$ has the uniform distribution $\boldsymbol{u}_N:=(\frac{1}{N},\ldots,\frac{1}{N})$. A probability distribution $(q_1,q_2,q_3)$ of $K_3$ (on $\{1,2,3\}$) can be represented by any pair of its coordinates; here we shall work with $(q_1,q_3):=\big(\PP(K_3=1),\PP(K_3=3)\big)$. Then
\begin{align}
q_1(\boldsymbol{u}_N)&:=\PP(K_3(\boldsymbol{u}_N)=1)=\frac{1}{N^2}\\
q_3(\boldsymbol{u}_N)&:=\PP(K_3(\boldsymbol{u}_N)=3)=\frac{(N-1)(N-2)}{N^2}.
\end{align} 
The set of points $\{\boldsymbol{v}_N:N\in\N\}=\{(1,0),(\tfrac{1}{4},0),(\tfrac{1}{9},\tfrac{2}{9}),(\tfrac{1}{16},\tfrac{6}{16}),(\tfrac{1}{25},\tfrac{12}{25}),(\tfrac{1}{36},\tfrac{20}{36}),\ldots\}$ where 
\begin{equation} \label{vndef}
\boldsymbol{v}_N:=(q_1(\boldsymbol{u}_N),q_3(\boldsymbol{u}_N))=\Big(\frac{1}{N^2},\frac{(N-1)(N-2)}{N^2}\Big)
\end{equation}
 are shown in Figures \ref{segments} and \ref{map345}, with line segments connecting consecutive points.\\\\
\begin{figure}[h!]
\centering
\begin{minipage}{0.5\textwidth}
  \centering
  \includegraphics[width=1\linewidth]{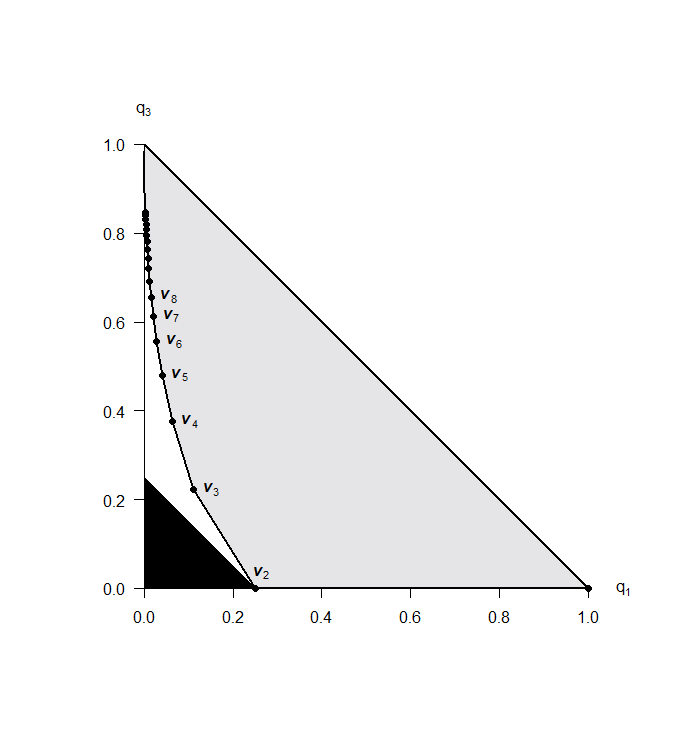}
  \captionsetup{width=0.8\linewidth}
  \vspace{-1.7cm}
  \captionof{figure}{Probability distributions of $K_3$ represented as points $(q_1,q_3)=\big(\PP(K_3=1),\PP(K_3=3)\big)$ with $q_1$ horizontal and $q_3$ vertical. Shaded in black is the restricted region specified by Proposition \ref{ofbound}. The gray region is the closed convex hull of $\{\boldsymbol{v}_N:N\in\N\}$ where $\boldsymbol{v}_N$ corresponds to the distribution of $K_3$ for i.i.d. sampling from a discrete uniform distribution on $N$ elements, as defined in \eqref{vndef}.}
  \label{segments}
\end{minipage}%
\begin{minipage}{0.5\textwidth}
  \centering
  \includegraphics[width=1\linewidth]{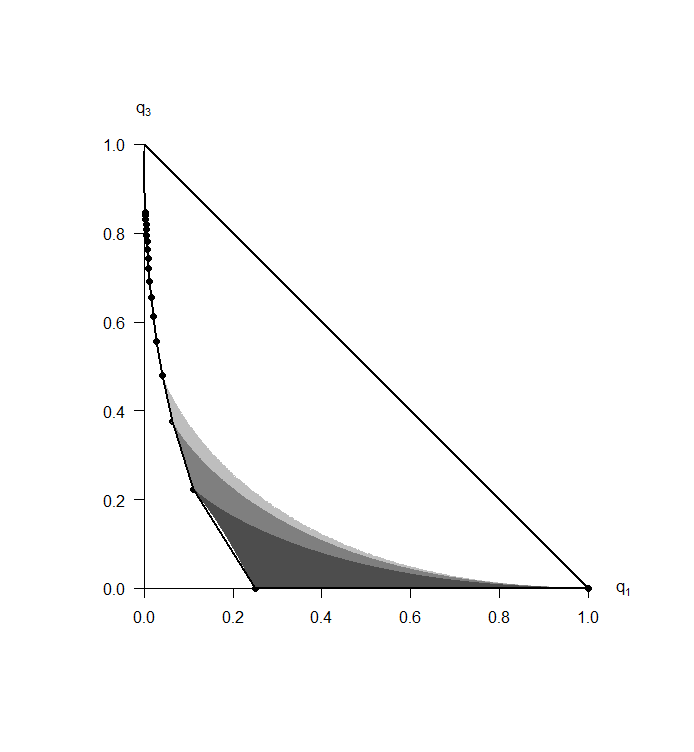}
  \vspace{-1.7cm}
  \captionsetup{width=0.81\linewidth}
  \captionof{figure}{The shaded regions (nested) correspond to the images of $\{(p_1,\ldots,p_m):p_i\geq 0,\sum p_i=1\}$ under the map $(p_1,\ldots,p_m)\mapsto\big(q_1(p_1,\ldots,p_m),q_3(p_1,\ldots,p_m)\big)$ for $m=3$ (dark), $m=4$ (dark and medium), and $m=5$ (dark, medium, and light). The existence of the gap between the left boundary of the dark region and the line segment connecting $\boldsymbol{v}_2$ and $\boldsymbol{v}_3$ is a consequence of Lemme \ref{buniform}}
  \label{map345}
\end{minipage}
\end{figure}

The slope of the line connecting $\boldsymbol{v}_N=\big(\tfrac{1}{N^2},\tfrac{(N-1)(N-2)}{N^2}\big)$ and $\boldsymbol{v}_{N+1}=\big(\tfrac{1}{(N+1)^2},\tfrac{N(N-1)}{(N+1)^2}\big)$ is 
\begin{equation} \label{Nslope}
\frac{\frac{N(N-1)}{(N+1)^2}-\frac{(N-1)(N-2)}{N^2}}{\frac{1}{(N+1)^2}-\frac{1}{N^2}}=-\frac{(N-1)(3N+2)}{2N+1};
\end{equation}
this is increasing in $N$ which proves Theorem \ref{mainthm}(i).
The equation of the $N$th line is given by 
\begin{equation}
	q_3-\frac{(N-1)(N-2)}{N^2}=-\frac{(N-1)(3N+2)}{2N+1}\bigg(q_1-\frac{1}{N^2}\bigg)
\end{equation}
or after rearranging,
\begin{equation} \label{lineN}
q_3+\frac{(N-1)(3N+2)}{2N+1}q_1=\frac{2N-2}{2N+1}.
\end{equation}
For $\boldsymbol{p}=(p_1,\ldots,p_m)$, define according to the left-hand side of \eqref{lineN} the functional
\begin{equation}
L_N(\boldsymbol{p}):=q_3(\boldsymbol{p})+\frac{(N-1)(3N+2)}{2N+1}q_1(\boldsymbol{p})
\end{equation}
which may be reexpressed as
\begin{align}
L_N(\boldsymbol{p})&=1-\big(1-L_N(\boldsymbol{p})\big)\\
&=1-\bigg(1-q_3(\boldsymbol{p})-q_1(\boldsymbol{p})-\bigg[\frac{(N-1)(3N+2)}{2N+1}-1\bigg]q_1(\boldsymbol{p})\bigg)\\
&=1-q_2(\boldsymbol{p})+\frac{3(N^2-N-1)}{2N+1}q_1(\boldsymbol{p})\\
&=1-\sum_{i=1}^m 3p_i^2(1-p_i)+\frac{3(N^2-N-1)}{2N+1}\sum_{i=1}^m p_i^3\\
&=1-3\sum_{i=1}^m p_i^2+\frac{3N(N+1)}{2N+1}\sum_{i=1}^m p_i^3.
\end{align}
Define
\begin{equation} \label{fformula}
f(N):=\frac{3N(N+1)}{2N+1}
\end{equation}
so 
\begin{equation}
L_N(\boldsymbol{p})=1-3\sum_{i=1}^m p_i^2+f(N)\sum_{i=1}^m p_i^3.
\end{equation}
To better understand the sequence of values $f(N)$, note that $f$ is increasing and 
\begin{equation}
	N<\frac{2N+2}{2N+1}(N)=\frac{2}{3}\cdot\underbrace{\frac{3N(N+1)}{2N+1}}_{f(N)}=\frac{2N}{2N+1}(N+1)<N+1.
\end{equation}
The first few values are $f(1)=2$, $f(2)=\frac{18}{5}$, $f(3)=\frac{36}{7}$, $f(4)=\frac{60}{9}$.
\begin{lemma} \label{linebounds} 
For $N\geq 1$ and any $\boldsymbol{p}=(p_1,\ldots,p_m)$ with $p_1\geq\ldots\geq p_m\geq 0$ and $\sum p_i\leq 1$,
\begin{equation}
L_N(\boldsymbol{p})\geq\frac{2N-2}{2N+1}.
\end{equation}
\end{lemma}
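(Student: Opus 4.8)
The plan is to reduce, exactly as in the proof of Proposition~\ref{ofbound}, to the case of a finite i.i.d. distribution $\boldsymbol p = (p_1,\ldots,p_m)$: convexity passes from general exchangeable sequences to i.i.d. ranked frequencies, and continuity together with the formula $L_N(\boldsymbol p) = 1 - 3\sum_i p_i^2 + f(N)\sum_i p_i^3$ lets me truncate to finitely many atoms. The crucial preliminary observation is algebraic. Writing $\sigma := \tfrac1N + \tfrac1{N+1}$ and $\pi := \tfrac1{N(N+1)}$, one checks that $f(N) = 3/\sigma$ and $\tfrac{2N-2}{2N+1} = 1 - \tfrac{3\pi}{\sigma}$, so that
\[ L_N(\boldsymbol p) - \frac{2N-2}{2N+1} = \frac{3}{\sigma}\Big(\pi - \sum_i p_i^2(\sigma - p_i)\Big). \]
Hence the lemma is equivalent to the clean inequality $\sum_i \theta(p_i) \le \pi$, where $\theta(x) := x^2(\sigma - x)$. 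Equivalently, the polynomial $3x^2 - f(N)x^3 - \tfrac{3}{2N+1}x$ factors as $-f(N)\,x\big(x-\tfrac1N\big)\big(x-\tfrac1{N+1}\big)$, whose roots $\tfrac1N$ and $\tfrac1{N+1}$ are exactly what make $\boldsymbol u_N$ and $\boldsymbol u_{N+1}$ the equality cases.

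I would then maximize $\Phi(\boldsymbol p) := \sum_i \theta(p_i)$ over $\{p_i \ge 0,\ \sum_i p_i \le 1\}$, where a maximizer exists by compactness after the truncation above. First, it suffices to bound $\Phi$ on $\{\sum_i p_i = 1\}$, since appending a new atom of small size $\varepsilon$ to any $\boldsymbol p$ with $\sum_i p_i < 1$ increases $\Phi$ by $\theta(\varepsilon) = \sigma\varepsilon^2 - \varepsilon^3 > 0$. Next, a first-order Lagrange analysis: at a maximizer every positive coordinate satisfies $\theta'(p_i) = \mu$ for a common multiplier $\mu$, and since $\theta'(x) = 2\sigma x - 3x^2$ is quadratic, the positive coordinates take at most two distinct values. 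Finally I record a merging monotonicity in the spirit of Lemma~\ref{merge}: since
\[ \theta(a+b) - \theta(a) - \theta(b) = ab\,\big(2\sigma - 3(a+b)\big), \]
merging two atoms whose sum is at most $\tfrac{2\sigma}{3}$ does not decrease $\Phi$, so at a maximizer no two atoms can sum to strictly less than $\tfrac{2\sigma}{3}$.

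These reductions leave two cases. In the one-value case $\boldsymbol p = \boldsymbol u_k$, we have $\Phi(\boldsymbol u_k) = \tfrac{\sigma}{k} - \tfrac1{k^2}$, which as a function of real $k$ is unimodal with maximum at $k = 2/\sigma = \tfrac{2N(N+1)}{2N+1} \in (N,N+1)$; since $\Phi(\boldsymbol u_N) = \Phi(\boldsymbol u_{N+1}) = \pi$, every integer $k$ gives $\Phi(\boldsymbol u_k) \le \pi$, with equality exactly at $k \in \{N,N+1\}$. In the two-value case, the Lagrange condition forces the two sizes $u > v$ to satisfy $u + v = \tfrac{2\sigma}{3}$ (twice the inflection point of $\theta$); but then two copies of $v$ would sum to $2v < \tfrac{2\sigma}{3}$, so by the merging monotonicity the maximizer can contain only a \emph{single} copy of $v$. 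This pins the configuration down to $a \ge 2$ copies of $u$ and one copy of $v$ with $(a-1)u = 1 - \tfrac{2\sigma}{3}$, a one-parameter integer family, and it remains to verify $a\,\theta(u) + \theta(v) < \pi$ there.

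I expect this last verification to be the main obstacle, though the preceding reductions make it tractable: substituting $u = \tfrac{3-2\sigma}{3(a-1)}$ and $v = \tfrac{2\sigma}{3} - u$ renders $a\theta(u)+\theta(v)$ an explicit rational function of $a$ and $N$, which one bounds by $\pi$ by treating $a$ as continuous over the feasibility window $\tfrac{f(N)}{2} < a < f(N)-1$ and checking the inequality is strict throughout. The genuinely delicate point of the whole argument, however, is the opening algebraic reduction: it is the factorization with roots at $\tfrac1N$ and $\tfrac1{N+1}$ that simultaneously explains the value $\tfrac{2N-2}{2N+1}$ of the bound and the fact that both uniform laws $\boldsymbol u_N,\boldsymbol u_{N+1}$ saturate it.
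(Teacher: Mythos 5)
Your reduction is sound as far as it goes, and it takes a genuinely different route from the paper's at the decisive middle step. The opening algebra is correct: with $\sigma=\tfrac1N+\tfrac1{N+1}$ and $\pi=\tfrac1{N(N+1)}$ one has $f(N)=3/\sigma$ and $L_N(\boldsymbol{p})-\tfrac{2N-2}{2N+1}=\tfrac3\sigma\big(\pi-\Phi(\boldsymbol{p})\big)$ where $\Phi(\boldsymbol{p}):=\sum_i\theta(p_i)$, $\theta(x):=x^2(\sigma-x)$, so the lemma is equivalent to $\Phi\le\pi$. Your mass-filling step is the paper's Lemma \ref{pdbest}, and your merging identity $\theta(a+b)-\theta(a)-\theta(b)=ab\,(2\sigma-3(a+b))$ is exactly Lemma \ref{genmerge} rewritten (recall $L_N=1-\tfrac3\sigma\Phi$). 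The divergence comes next: the paper (Lemma \ref{onlyunif}) destroys every non-uniform mass-one distribution by a merge/average/third-atom case analysis, so that only uniforms survive and the endgame is the slope monotonicity of Lemma \ref{buniform}; you instead invoke KKT stationarity ($\theta'(p_i)=\mu$ at every positive atom, with $\theta'$ quadratic), which is a clean alternative but leaves a residual two-value family --- $a$ copies of $u$ and one copy of $v$ with $u+v=\tfrac{2\sigma}3$ --- that the paper never has to confront. Your one-value analysis (unimodality of $k\mapsto\sigma/k-1/k^2$ with peak in $(N,N+1)$ and value $\pi$ at both $k=N$ and $k=N+1$) is correct and is Lemma \ref{buniform} in your coordinates.

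That residual family is where your proposal has a genuine gap: the inequality $a\theta(u)+\theta(v)<\pi$ on the window $\tfrac{f(N)}2<a<f(N)-1$ is precisely where your route must do real work, and you only sketch a plan for it; as written, the lemma is not proved. The claim is true, and your continuous-$a$ plan does close it: since $u+v=\tfrac{2\sigma}3$, the identity $\theta(x)+\theta(s-x)=\sigma s^2-s^3+x(s-x)(3s-2\sigma)$ with $s=\tfrac{2\sigma}3$ shows $\theta(u)+\theta(v)=\theta\big(\tfrac{2\sigma}3\big)$, hence $a\theta(u)+\theta(v)=(a-1)\theta(u)+\theta\big(\tfrac{2\sigma}3\big)$; writing $b=a-1$ and $c=(a-1)u=1-\tfrac{2\sigma}3$, single-variable calculus gives $(a-1)\theta(u)=\tfrac{\sigma c^2}{b}-\tfrac{c^3}{b^2}\le\tfrac{\sigma^2c}{4}$, so $a\theta(u)+\theta(v)\le\tfrac{\sigma^2}4-\tfrac{\sigma^3}{54}$, and this is $<\pi$ if and only if $27N(N+1)<2(2N+1)^3$, i.e. $(N-1)(16N^2+13N-2)>0$, which holds for all $N\ge2$. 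But note that the same flatness identity shows you never needed this computation: merging $u$ with $v$ leaves $\Phi$ unchanged, and afterwards the new atom $\tfrac{2\sigma}3$ and a surviving copy of $u$ (there is one, since $a\ge2$ for $N\ge2$) are distinct with sum exceeding $\tfrac{2\sigma}3$, so replacing them by their average strictly increases $\Phi$, contradicting maximality. This is exactly case (iii) of the paper's Lemma \ref{onlyunif}, and it eliminates your two-value case outright. Finally, $N=1$ must be split off, as the paper does: there your window is empty, the stationary two-value configurations have $a=1$, and every two-atom mass-one distribution attains $\Phi=\pi$ exactly, so your strictness claims fail; the lemma is nonetheless trivial for $N=1$ since $L_1=q_3\ge0$.
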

\vspace{.5cm}
Geometrically, Lemma \ref{linebounds} asserts that for any $\boldsymbol{p}=(p_1,\ldots,p_m)$, the point $\big(q_1(\boldsymbol{p}),q_3(\boldsymbol{p})\big)$ lies on or above each of the lines connecting $\boldsymbol{v}_N$ and $\boldsymbol{v}_{N+1}$ for $N\in\N$. It will be shown in the proof that for $N\geq 2$, $L_N(\boldsymbol{p})=\frac{2N-2}{2N+1}$ if and only if $\boldsymbol{p}=\boldsymbol{u}_N$ or $\boldsymbol{p}=\boldsymbol{u}_{N+1}$; as for $N=1$, $L_1(\boldsymbol{p})=q_3(\boldsymbol{p})=0$ is attained if and only if $\boldsymbol{p}=(p_1,p_2)$ with $p_1+p_2=1$.  \\

\indent The strategy for proving Lemma \ref{linebounds} is to show that $L_N$ is minimized at precisely $\boldsymbol{v}_N$ and $\boldsymbol{v}_{N+1}$ by reducing the domain of minimization in stages, first to $(p_1,\ldots,p_m)$ with $\sum p_i=1$, then to the uniform distributions, and finally to $\boldsymbol{u}_N$ and $\boldsymbol{u}_{N+1}$. The key to the proof is the following \textit{merging} lemma, which generalizes Lemma \ref{merge}.
\begin{lemma} \label{genmerge}
For $N\geq 1$ and $(p_1,\ldots,p_m)$ with $m\geq 2$,
\begin{equation}
L_N(p_1+p_2,p_3,\ldots,p_m)-L_N(p_1,p_2,p_3,\ldots,p_m)=3p_1p_2\big[(p_1+p_2)f(N)-2\big]
\end{equation}
which is positive, negative, or zero according to the sign of $p_1+p_2-\tfrac{2}{f(N)}$.
\end{lemma}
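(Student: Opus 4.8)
The plan is to establish the identity by direct substitution into the explicit polynomial form of $L_N$ derived just above the statement, namely
\begin{equation*}
L_N(p_1,\ldots,p_m)=1-3\sum_{i=1}^m p_i^2+f(N)\sum_{i=1}^m p_i^3,
\end{equation*}
and then read off the sign. Write $a=p_1$ and $b=p_2$. The crucial observation is that replacing the two atoms $a,b$ by the single atom $a+b$ leaves $p_3,\ldots,p_m$ unchanged, so in the difference $L_N(a+b,p_3,\ldots,p_m)-L_N(a,b,p_3,\ldots,p_m)$ the constant $1$, the tail $\sum_{i\geq 3}p_i^2$ of the quadratic sum, and the tail $\sum_{i\geq 3}p_i^3$ of the cubic sum all cancel. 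Only the contributions of $a$ and $b$ to the two power sums survive, so the problem reduces to computing how each power sum changes under the merge.

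First I would record the two elementary identities $(a+b)^2-a^2-b^2=2ab$ and $(a+b)^3-a^3-b^3=3ab(a+b)$. Feeding these into the surviving terms gives
\begin{equation*}
L_N(a+b,p_3,\ldots,p_m)-L_N(a,b,p_3,\ldots,p_m)=-3(2ab)+f(N)\cdot 3ab(a+b)=3ab\big[(a+b)f(N)-2\big],
\end{equation*}
which is exactly the claimed expression $3p_1p_2\big[(p_1+p_2)f(N)-2\big]$. This mirrors the computation in Lemma \ref{merge} for the functional $q_2$, where the same two power-sum differences produce a prefactor $3ab$ times a quantity linear in $a+b$; the present lemma is the analogue for $L_N$, which explains the sense in which it generalizes Lemma \ref{merge}.

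For the sign statement I would argue as follows. Since $f(N)>0$ for every $N\geq 1$ (indeed $f(1)=2$ and $f$ is increasing), the quantity $(p_1+p_2)f(N)-2$ has the same sign as $p_1+p_2-\tfrac{2}{f(N)}$. Assuming the merged atoms are genuine, i.e. $p_1,p_2>0$, the prefactor $3p_1p_2$ is strictly positive, so the sign of the whole difference matches that of $p_1+p_2-\tfrac{2}{f(N)}$; when one of the atoms vanishes the difference is simply $0$. I do not expect any genuine obstacle here: the result is a polynomial identity whose only delicate point is correctly tracking which terms cancel under the merge, after which the sign claim is immediate.
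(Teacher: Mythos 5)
Your proof is correct and follows essentially the same route as the paper: direct substitution into the polynomial form $L_N(\boldsymbol{p})=1-3\sum_i p_i^2+f(N)\sum_i p_i^3$, cancellation of the unaffected terms, and the identities $(a+b)^2-a^2-b^2=2ab$, $(a+b)^3-a^3-b^3=3ab(a+b)$. Your explicit handling of the degenerate case $p_1p_2=0$ in the sign statement is a small refinement the paper leaves implicit, but the argument is the same.
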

\begin{proof}
Let $a=p_1$ and $b=p_2$. We have
\begin{equation}
L_N(a,b,p_3,\ldots,p_m)=1-3a^2-3b^2-3\sum_{i=3}^m p_i^2+f(N)(a^3+b^3)+f(N)\sum_{i=3}^m p_i^3
\end{equation}
and
\begin{equation}
L_N(a+b,p_3,\ldots,p_m)=1-3(a+b)^2-3\sum_{i=3}^m p_i^2+f(N)(a+b)^3+f(N)\sum_{i=3}^m p_i^3.
\end{equation}
Then
\begin{align}
	L_N(a+b,p_3,\ldots,p_m)-L_N(a,b,p_3,\ldots,p_m)&=-6ab+f(N)(3a^2b+3ab^2)\\
	&=3ab\big[(a+b)f(N)-2\big].
\end{align}
\end{proof}
The proof of Lemma \ref{linebounds} is organized according to the following lemmas.

\begin{lemma} \label{pdbest}
Let $\mathcal{P}$ denote the set of all finite ranked discrete distributions, and let $\mathcal{P}^1$ denote the set of finite ranked discrete distributions $(p_1,\ldots,p_m)$ with $\sum p_i=1$. Then for any $N\geq 1$, we have the equality of sets
\begin{equation}
	\argmin_{\boldsymbol{p}\in\mathcal{P}}L_N(\boldsymbol{p})=\argmin_{\boldsymbol{p}\in\mathcal{P}^1}L_N(\boldsymbol{p})
\end{equation}
\end{lemma}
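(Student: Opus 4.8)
The plan is to exploit the fact that $L_N$ splits additively over atoms. Writing
$L_N(\boldsymbol{p}) = 1 + \sum_i g(p_i)$
with $g(x) := f(N)x^3 - 3x^2 = x^2\big(f(N)x - 3\big)$, one sees immediately that $g(x) < 0$ for exactly those $x$ with $0 < x < 3/f(N)$. The substance of the lemma is then the intuitive statement that it never pays to leave mass ``unassigned'': a distribution with $\sum_i p_i = s < 1$ can always be strictly improved by converting the deficit $1-s$ into (sufficiently small) atoms.

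Concretely, given $\boldsymbol{p} = (p_1,\ldots,p_m) \in \mathcal{P}$ with $s := \sum_i p_i < 1$, I would append $k$ new atoms each of size $(1-s)/k$, taking $k$ large enough that $(1-s)/k < 3/f(N)$; after reordering (permissible since $L_N$ is symmetric), the resulting $\boldsymbol{p}' $ lies in $\mathcal{P}^1$. Because appending atoms only adds the corresponding terms $g(\cdot)$ to the sum, we get $L_N(\boldsymbol{p}') - L_N(\boldsymbol{p}) = k\,g\big((1-s)/k\big) < 0$. Thus every distribution with positive deficit is strictly dominated by a member of $\mathcal{P}^1$.

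From this single estimate the claimed set equality follows by a short comparison. Since every $\boldsymbol{p} \in \mathcal{P}$ admits some $\boldsymbol{p}' \in \mathcal{P}^1$ with $L_N(\boldsymbol{p}') \le L_N(\boldsymbol{p})$, while conversely $\mathcal{P}^1 \subseteq \mathcal{P}$, the two infima coincide; call the common value $I$. Any $\boldsymbol{p} \in \argmin_{\mathcal{P}} L_N$ must satisfy $\sum_i p_i = 1$, for otherwise the strict inequality above would yield a point of $\mathcal{P}^1 \subseteq \mathcal{P}$ with value below $I$, contradicting $L_N(\boldsymbol{p}) = I = \inf_{\mathcal{P}} L_N$. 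Hence $\argmin_{\mathcal{P}} L_N \subseteq \argmin_{\mathcal{P}^1} L_N$, and the reverse inclusion is immediate because $\mathcal{P}^1 \subseteq \mathcal{P}$ and the infima agree.

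I expect the only genuine care to be bookkeeping rather than mathematics: the conclusion should be framed in terms of infima and argmin sets so that it holds whether or not the minimum is attained, which is why no compactness of $\mathcal{P}$ is needed (both argmin sets could a priori be empty). The one computation to verify is the additivity of $L_N$ over atoms, which makes the change upon appending atoms exactly $\sum g$; this is the same identity as Lemma \ref{genmerge} specialized to two atoms, but the additive form is the cleaner tool here.
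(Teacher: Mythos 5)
Your proof is correct and rests on exactly the same mechanism as the paper's: both exploit the additive decomposition $L_N(\boldsymbol{p}) = 1 + \sum_i \big(f(N)p_i^3 - 3p_i^2\big)$ and the observation that appending an atom of size $\varepsilon < 3/f(N)$ changes $L_N$ by $\varepsilon^2\big(f(N)\varepsilon - 3\big) < 0$, so no distribution with total mass below $1$ can be a minimizer over $\mathcal{P}$. The only difference is one of bookkeeping: the paper appends a single small atom (not necessarily reaching total mass $1$) and stops there, whereas you fill the entire deficit with $k$ equal small atoms so as to land in $\mathcal{P}^1$, which makes your treatment of the infima and of the two $\argmin$ sets (including the case where they might not be attained) slightly more self-contained than the paper's.
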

\begin{proof}
Let $\boldsymbol{p}_0=(p_1,\ldots,p_m)\in\mathcal{P}$ such that $\sum_{i=1}^m p_i<1$. Let $\varepsilon$ satisfy $0<\varepsilon<\min\{\frac{3}{f(N)},1-\sum_{i=1}^m p_i\}$. Then
\begin{align}
	L_N(\varepsilon,p_1,\ldots,p_m)&=-3\varepsilon^2+f(N)\varepsilon^3+L_N(p_1,\ldots,p_m)\\
	&=\varepsilon^2(f(N)\varepsilon-3)+L_N(p_1,\ldots,p_m)\\
	&<L_N(p_1,\ldots,p_m).
\end{align}
This shows that if $\boldsymbol{p}_0\notin\mathcal{P}^1$, then $\boldsymbol{p}_0\notin\argmin_{\boldsymbol{p}\in\mathcal{P}}L_N(\boldsymbol{p})$.\\
\end{proof}

\begin{lemma} \label{onlyunif}
Let $\mathcal{P}^1$ denote the set of finite ranked discrete distributions $(p_1,\ldots,p_m)$ with $\sum p_i=1$, and let $\mathcal{U}:=\big\{\boldsymbol{u}_m:m\in\N\}$. Then for $N\geq 2$, we have the equality of sets
\begin{equation}
\argmin_{\boldsymbol{p}\in\mathcal{P}^1}L_N(\boldsymbol{p})=\argmin_{\boldsymbol{p}\in\mathcal{U}} L_N(\boldsymbol{p})
\end{equation}
\end{lemma}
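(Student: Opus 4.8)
The plan is to characterize $\argmin_{\mathcal{P}^1}L_N$ by ruling out every non-uniform distribution using a small repertoire of local ``moves,'' all derived from the merging identity of Lemma \ref{genmerge} applied to the formula $L_N(\boldsymbol{p})=1-3\sum p_i^2+f(N)\sum p_i^3$. Writing $t:=\tfrac{2}{f(N)}$, Lemma \ref{genmerge} says that merging two parts with sum $s$ changes $L_N$ by $3p_ip_j(sf(N)-2)$, so a \emph{merge} strictly lowers $L_N$ iff $s<t$, and its reverse, a \emph{split} of a single part of size $c$, strictly lowers $L_N$ iff $c>t$. I would then build a third ``equalize'' move by composing the two: merging a pair $(p_i,p_j)$ of sum $s$ and splitting the result into $(s/2,s/2)$ changes $L_N$ by $3(sf(N)-2)\big[p_ip_j-\tfrac{s^2}{4}\big]=-\tfrac34(p_i-p_j)^2(sf(N)-2)$; hence equalizing an \emph{unequal} pair whose sum exceeds $t$ strictly decreases $L_N$, while a pair summing exactly to $t$ is a flat direction.

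First I would secure existence and reduce to a compact domain: starting from any $\boldsymbol{p}\in\mathcal{P}^1$, repeatedly merge the two smallest parts as long as their sum is $<t$. Each such merge strictly lowers $L_N$ and removes a part, so the process terminates either with a single part or with the two smallest parts summing to $\ge t$; in the latter case the second-smallest part is $\ge t/2$, forcing all but one part to be $\ge t/2$ and thus bounding the number of parts by $f(N)+1$. Consequently the infimum of $L_N$ over $\mathcal{P}^1$ is attained over the finite union of compact simplices with at most $\lfloor f(N)\rfloor+1$ parts. At any minimizer $\boldsymbol{p}$ with $m\ge 2$ parts, the three moves yield necessary conditions: no beneficial merge forces every pair of parts to sum to $\ge t$, and no beneficial equalize forces every \emph{unequal} pair to sum to $\le t$. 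Combining, every unequal pair sums to exactly $t$. Taking $a$ to be the largest part value, any part different from $a$ must equal $t-a$, so the parts take at most the two values $a$ and $b:=t-a$ with $a>b$; moreover if two copies of $b$ were present, their sum $2b<a+b=t$ would admit a beneficial merge, so $b$ occurs exactly once.

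The remaining configuration is $j$ copies of $a$ together with a single $b$, where $a+b=t$ and $ja+b=1$; eliminating it is the crux of the argument and the step I expect to be the main obstacle, precisely because the pair $(a,b)$ sums to exactly $t$ and is therefore \emph{flat}, so no single strict move removes it. I would handle it in two subcases. If $j=1$ then $a+b=1=t$, i.e. $f(N)=2$, which happens only at $N=1$ and is excluded by the hypothesis $N\ge 2$. If $j\ge 2$, I would first \emph{equalize} one copy of $a$ with the single $b$; since their sum is exactly $t$ this leaves $L_N$ unchanged, producing another minimizer with $j-1\ge 1$ copies of $a$ and two copies of $t/2$. But now a remaining copy of $a$ and a copy of $t/2$ are unequal (as $a>t/2$) and sum to $a+t/2>t$, so the equalize move strictly decreases $L_N$ at this new minimizer, a contradiction. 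Hence no minimizer has two distinct values, so every minimizer over $\mathcal{P}^1$ lies in $\mathcal{U}$. Finally, since $\mathcal{U}\subseteq\mathcal{P}^1$ and the $\mathcal{P}^1$-minimum is attained within $\mathcal{U}$, the two minimal values coincide and the two argmin sets are equal, which is the claim.
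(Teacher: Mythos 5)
Your proof is correct, and it runs on the same two local moves as the paper's proof --- the merge of Lemma \ref{genmerge} and the ``equalize'' obtained by composing a merge with a split, with the same identity $-\tfrac{3}{4}(p_i-p_j)^2\big((p_i+p_j)f(N)-2\big)$ --- but the overall logical structure is genuinely different. The paper argues pointwise: every $\boldsymbol{p}\in\mathcal{P}^1$ possessing two distinct nonzero values admits a strictly improving move, the flat case $a+b=\tfrac{2}{f(N)}$ being handled by invoking a third nonzero value $c$ (which exists because $a+b<1$) and equalizing against it after a neutral merge of $(a,b)$; this shows directly that no non-uniform distribution is a minimizer. You instead prove first that the infimum over $\mathcal{P}^1$ is attained (merging small parts reduces matters to distributions with at most $\lfloor f(N)\rfloor+1$ parts, then compactness), then pin down the only possible non-uniform minimizer configuration ($j$ copies of $a$ plus a single $b$ with $a+b=\tfrac{2}{f(N)}$), and destroy it by a neutral equalize of $(a,b)$ followed by a strict equalize of $(a,\tfrac{1}{f(N)})$ --- the same ``neutral move, then strict equalize with a third part'' idea, with equalize playing the role the merge plays in the paper. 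What your organization buys is the attainment step, and this is a genuine improvement: the lemma asserts an equality of argmin sets, and the inclusion $\argmin_{\boldsymbol{p}\in\mathcal{U}}L_N(\boldsymbol{p})\subseteq\argmin_{\boldsymbol{p}\in\mathcal{P}^1}L_N(\boldsymbol{p})$ --- as well as the later deduction of Lemma \ref{linebounds} from the chain of argmin identities --- tacitly requires $\argmin_{\boldsymbol{p}\in\mathcal{P}^1}L_N(\boldsymbol{p})$ to be nonempty, i.e.\ that the infimum over the non-compact set $\mathcal{P}^1$ is attained; the paper's proof never addresses this point, whereas your reduction supplies it. One small thing to tidy: your claim that each merge of the two smallest parts ``strictly lowers'' $L_N$ fails when one of them equals zero (the change is then $0$), so you should either adopt the convention that ``parts'' means nonzero parts or weaken ``strictly lowers'' to ``does not increase,'' which is all the reduction needs.
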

\begin{proof}
Let $\boldsymbol{p}_0=(p_1,\ldots,p_m)$, not necessarily ranked, such that $\sum_{i=1}^m p_i=1$. Suppose $\boldsymbol{p}_0$ has a pair of distinct nonzero values, say $a=p_1$ and $b=p_2$ with $a,b>0$ and $a\neq b$. Consider the three cases as designated in Lemma \ref{genmerge}, noting that $\tfrac{2}{f(N)}<1$ for $N\geq 2$. \\\\
\indent (i)\ \ If $a+b<\frac{2}{f(N)}$, then $L_N(a+b,p_3,\ldots,p_m)<L_N(a,b,p_3,\ldots,p_m)$ by Lemma \ref{genmerge}.\\
\indent (ii) If $a+b>\frac{2}{f(N)}$,
\begin{align}
	L_N(\tfrac{a+b}{2}&,\tfrac{a+b}{2},p_3,\ldots,p_m)-L_N(a,b,p_3,\ldots,p_m)\\
	&=\big(L_N(a+b,p_3,\ldots,p_m)-L_N(a,b,p_3,\ldots,p_m)\big)\\ &\indent-\big(L_N(a+b,p_3,\ldots,p_m)-L_N(\tfrac{a+b}{2},\tfrac{a+b}{2},p_3,\ldots,p_m)\big)\nonumber\\
	&=3ab\big((a+b)f(N)-2\big)-3(\tfrac{a+b}{2})^2\big((a+b)f(N)-2\big)\\
	&=3\big(ab-(\tfrac{a+b}{2})^2\big)\big((a+b)f(N)-2\big)
\end{align}
\indent\indent which is negative since $ab-(\frac{a+b}{2})^2<0$ and $(a+b)f(N)-2>0$.\\
\indent (iii) If $a+b=\frac{2}{f(N)}<1$, then there must exist a third nonzero value, say $p_3=c>0$. If \newline\indent\indent $c=\frac{2}{f(N)}$, then $a\neq c$ and $a+c>\frac{2}{f(N)}$ so $L_N(\frac{a+c}{2},\frac{a+c}{2},b,p_4,\ldots,p_m)<L_N(a,b,c,p_4,\ldots,p_m)$ \newline\indent\indent by case (ii). If $c\neq\frac{2}{f(N)}$, then by merging $a$ and $b$, which does not change $L_N$, and then \newline\indent\indent subsequently averaging $a+b$ and $c$ gives $L_N(\frac{a+b+c}{2},\frac{a+b+c}{2},p_4,\ldots,p_m)<L_N(a,b,c,p_4,\ldots,p_m)$\newline\indent\indent by case (ii) again.\\\\
Since permuting values in any discrete distribution does not change $L_N$, the analysis above holds for all ranked discrete distributions and thus shows that among $\boldsymbol{p}\in\mathcal{P}^1$, $L_N$ cannot be minimized at any $\boldsymbol{p}$ with a pair of distinct nonzero values, i.e. any non-uniform distribution.\\
\end{proof}
\textbf{Remark.} As mentioned previously, for $N=1$, $$\argmin_{\boldsymbol{p}\in\mathcal{P}}L_1(\boldsymbol{p})=\{(p_1,p_2):p_1\geq p_2\geq 0, p_1+p_2=1\}$$ which differs from the general case $N\geq 2$. The reason the proof of Lemma \ref{onlyunif} fails for $N=1$ is that $f(1)=2$, so $\tfrac{2}{f(1)}=1$ and case (iii) of the proof breaks down.\\
\begin{lemma} \label{buniform}
Let $\mathcal{U}:=\{\boldsymbol{u}_m:m\in\N\}$. Then for $N\geq 1$,
\begin{equation}
	\argmin_{\boldsymbol{p}\in\mathcal{U}}L_N(\boldsymbol{p})=\{\boldsymbol{u}_N,\boldsymbol{u}_{N+1}\}
\end{equation}
\end{lemma}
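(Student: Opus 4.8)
The plan is to reduce this to a one-variable minimization. Since $\boldsymbol{u}_m$ consists of $m$ atoms each equal to $1/m$, one has $\sum_i p_i^2 = 1/m$ and $\sum_i p_i^3 = 1/m^2$, so from the closed form $L_N(\boldsymbol{p}) = 1 - 3\sum_i p_i^2 + f(N)\sum_i p_i^3$ established above,
\begin{equation}
L_N(\boldsymbol{u}_m) = 1 - \frac{3}{m} + \frac{f(N)}{m^2}.
\end{equation}
I would then introduce the smooth function $g(x) := 1 - 3/x + f(N)/x^2$ on $x > 0$, so that minimizing $L_N$ over $\mathcal{U}$ is exactly minimizing $g$ over the positive integers.

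Next I would analyze $g$ by calculus. Differentiating gives $g'(x) = 3/x^2 - 2f(N)/x^3 = x^{-3}\big(3x - 2f(N)\big)$, which is negative for $x < \tfrac{2}{3}f(N)$ and positive for $x > \tfrac{2}{3}f(N)$. Hence $g$ is strictly decreasing on $(0, x^*)$ and strictly increasing on $(x^*,\infty)$, with $x^* := \tfrac{2}{3}f(N)$ its unique (strict) minimizer. To locate $x^*$ among the integers I would invoke the inequality already established in the text, namely $N < \tfrac{2}{3}f(N) < N+1$, which says precisely $N < x^* < N+1$. Combined with strict unimodality, this forces $g(1) > g(2) > \cdots > g(N)$ (all arguments lying in the decreasing region) and $g(N+1) < g(N+2) < \cdots$ (all arguments lying in the increasing region), so every integer other than $N$ and $N+1$ yields a value strictly larger than both $g(N)$ and $g(N+1)$.

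The crux is then to show that the two bracketing candidates tie, i.e. $g(N) = g(N+1)$, so that both $\boldsymbol{u}_N$ and $\boldsymbol{u}_{N+1}$ — and not merely one of them — are minimizers. I would verify this by a direct computation:
\begin{equation}
g(N) - g(N+1) = -\frac{3}{N(N+1)} + f(N)\,\frac{2N+1}{N^2(N+1)^2},
\end{equation}
and substituting $f(N) = \tfrac{3N(N+1)}{2N+1}$ collapses the second term to $\tfrac{3}{N(N+1)}$, giving $g(N) - g(N+1) = 0$. Conceptually this equality is forced rather than coincidental: line $N$ from \eqref{lineN} passes through both $\boldsymbol{v}_N$ and $\boldsymbol{v}_{N+1}$, so $L_N(\boldsymbol{u}_N) = L_N(\boldsymbol{u}_{N+1}) = \tfrac{2N-2}{2N+1}$ by construction. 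Since these two endpoints achieve a common value while every other integer $m$ gives a strictly larger value, I conclude $\argmin_{\boldsymbol{p}\in\mathcal{U}} L_N(\boldsymbol{p}) = \{\boldsymbol{u}_N, \boldsymbol{u}_{N+1}\}$, which holds for all $N \geq 1$ (the case $N=1$ is covered since $x^* = 4/3 \in (1,2)$ and $g(1) = g(2) = 0$).

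The only delicate point — there is no serious obstacle here — is organizing the logic so that strict unimodality together with $N < x^* < N+1$ simultaneously excludes ties at non-adjacent integers and permits the intended tie between the two integers straddling $x^*$; the exact identity $g(N) = g(N+1)$ is exactly what upgrades the conclusion from a single minimizer to the pair $\{\boldsymbol{u}_N, \boldsymbol{u}_{N+1}\}$.
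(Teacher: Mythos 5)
Your proof is correct, and it takes a genuinely different route from the paper's. The paper argues geometrically in the $(q_1,q_3)$-plane: it computes the slope of the segment joining $\boldsymbol{v}_m$ and $\boldsymbol{v}_{m+1}$ to be $2-f(m)$, observes that this is strictly decreasing in $m$ (so the points $\boldsymbol{v}_m$ lie in convex position), and concludes that the line through $\boldsymbol{v}_N$ and $\boldsymbol{v}_{N+1}$ --- whose slope $2-f(N)$ is exactly the level-line slope of the affine functional $L_N$ --- supports the polygonal path at precisely those two vertices; the write-up explicitly leans on Figure \ref{segments} for this last step. You instead work entirely in one variable: you interpolate $m\mapsto L_N(\boldsymbol{u}_m)$ by the smooth function $g(x)=1-3/x+f(N)/x^2$, show by calculus that $g$ is strictly unimodal with unique minimizer $x^*=\tfrac{2}{3}f(N)$, invoke the paper's inequality $N<\tfrac{2}{3}f(N)<N+1$ to place $x^*$ strictly between $N$ and $N+1$, and verify the tie $g(N)=g(N+1)$ by direct computation (a tie which, as you note, is forced because $L_N$ was built from the line through $\boldsymbol{v}_N$ and $\boldsymbol{v}_{N+1}$). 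The two arguments share the same engine --- monotonicity of $f$ and the bracketing $N<\tfrac{2}{3}f(N)<N+1$ --- but yours trades the planar convexity picture for a self-contained one-dimensional argument: it gains rigor and independence from the figure (making the paper's ``obvious'' step precise would require exactly the kind of unimodality or telescoping-of-differences argument you supply), while the paper's version makes visible the convex position of the points $\{\boldsymbol{v}_m\}$, which is the same computation underlying Theorem \ref{mainthm}(i).
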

\begin{proof}
The claim is obvious based on Figure \ref{segments}, which shows that the slopes between $\boldsymbol{v}_N$ and $\boldsymbol{v}_{N+1}$ for $N\in\N$ are decreasing in $N$. Indeed, the slope of the $N$th line segment is computed in \eqref{Nslope} as
\begin{equation}
-\frac{(N-1)(3N+2)}{2N+1}=-\frac{3N^2-N-2}{2N+1}=2-\frac{3N(N+1)}{2N+1}=2-f(N)
\end{equation}
which is decreasing in $N$.\\
\end{proof}

\begin{proof}[Proof of Lemma \ref{linebounds}]
The claim holds trivially for $N=1$. For $N\geq 2$, applying Lemmas \ref{pdbest}, \ref{onlyunif}, and \ref{buniform} yields
\begin{equation}
\argmin_{\boldsymbol{p}\in\mathcal{P}}L_N(\boldsymbol{p})=\argmin_{\boldsymbol{p}\in\mathcal{P}^1}L_N(\boldsymbol{p})=\argmin_{\boldsymbol{p}\in\mathcal{U}} L_N(\boldsymbol{p})=\{\boldsymbol{u}_N,\boldsymbol{u}_{N+1}\}
\end{equation}
and therefore for any $\boldsymbol{p}=(p_1,\ldots,p_m)$ with $p_i\geq 0$ and $\sum p_i\leq 1$, 
\begin{equation}
L_N(\boldsymbol{p})\geq L_N(\boldsymbol{u}_N)=L_N(\boldsymbol{u}_{N+1})=\frac{2N-2}{2N+1}.
\end{equation}
\end{proof}
\begin{proof}[Proof of Theorem \ref{mainthm}]
Part (i) was proven earlier by the slope computation \eqref{Nslope} and illustrated in Figure \ref{segments}. For part (ii), Lemma \ref{linebounds} asserts that $(q_1(\boldsymbol{p}),q_2(\boldsymbol{p}),q_3(\boldsymbol{p}))\in \conv(V_3)$ for any finite ranked discrete distribution $\boldsymbol{p}$. Extension to infinite discrete distributions $(p_1,p_2,\ldots)$ follows because $\lim_{m\rightarrow\infty}q_i(p_1,\ldots,p_m)=q_i(p_1,p_2,\ldots)$, and then extension to exchangeable sequences holds by convexity.\\
\end{proof}
\section{Higher dimensions} \label{highdim}
This section aims to extend some of the results in the previous section to $K_n$ for larger $n$. Here $q_{n,i}:=\PP(K_n=i)$. We begin by generalizing Lemma \ref{merge} and Proposition \ref{ofbound}.  
\begin{lemma} \label{nmerge}
For $n\geq 3$ and $(p_1,\ldots,p_m)$ with $m\geq 3$, $\sum_{i=1}^m p_i=1$, $p_1\leq\ldots\leq p_m$,
\begin{equation}
q_{n,2}(p_1+p_2,p_3,\ldots,p_m)\geq q_{n,2}(p_1,p_2,p_3,\ldots,p_m).
\end{equation}
\end{lemma}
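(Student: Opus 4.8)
The plan is to compute the difference
\[
D:=q_{n,2}(p_1+p_2,p_3,\ldots,p_m)-q_{n,2}(p_1,p_2,\ldots,p_m)
\]
explicitly, exactly as in the proof of Lemma \ref{merge}, and show $D\ge 0$. Writing $a=p_1$, $b=p_2$ and using that $q_{n,2}(\boldsymbol p)=\sum_{i<j}\big[(p_i+p_j)^n-p_i^n-p_j^n\big]$ is the probability that exactly two atoms are sampled, every term not involving the pair $\{1,2\}$ cancels and one is left with
\[
D=\sum_{k=3}^m\psi(a,b,p_k)-\phi(a,b),
\]
where $\phi(x,y):=(x+y)^n-x^n-y^n$ and $\psi(x,y,z):=(x+y+z)^n-(x+y)^n-(x+z)^n-(y+z)^n+x^n+y^n+z^n$. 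The probabilistic reading is that merging the two atoms lowers $K_n$ by one precisely when both $a$ and $b$ are sampled, so $D$ is the probability of an outcome using exactly $\{a,b,p_k\}$ for some $k\ge3$ (a triple that collapses to a pair) minus the probability of an outcome using exactly $\{a,b\}$; the inequality asserts the former dominates when $a,b$ are the smallest atoms.

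The key step is a finite-difference representation that exposes the sign. For $n\ge 3$ one verifies by elementary integration that
\[
\psi(x,y,z)=n(n-1)\int_0^x\!\!\int_0^y\big[(r+s+z)^{n-2}-(r+s)^{n-2}\big]\,dr\,ds,\qquad \phi(x,y)=n(n-1)\int_0^x\!\!\int_0^y(r+s)^{n-2}\,dr\,ds.
\]
Substituting these into the expression for $D$ and collecting the $(r+s)^{n-2}$ terms gives
\[
D=n(n-1)\int_0^a\!\!\int_0^b\Big[\,\sum_{k=3}^m(r+s+p_k)^{n-2}-(m-1)(r+s)^{n-2}\Big]\,dr\,ds,
\]
so it suffices to show that the bracketed integrand is nonnegative for all $(r,s)\in[0,a]\times[0,b]$.

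For this pointwise bound I would use the ordering hypothesis twice. Since $a,b$ are the two smallest values, $p_k\ge b$ for every $k\ge 3$, and $x:=r+s\le a+b\le 2b$; hence $x+p_k\ge x+b\ge \tfrac32 x$, so $(r+s+p_k)^{n-2}\ge(\tfrac32)^{n-2}x^{n-2}$. As the sum has $m-2$ terms, the integrand is at least $\big[(m-2)(\tfrac32)^{n-2}-(m-1)\big]x^{n-2}$, which is nonnegative exactly when $(\tfrac32)^{n-2}\ge\tfrac{m-1}{m-2}$.

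The main obstacle is that this final inequality is essentially tight. For $n\ge 4$ it holds for every $m\ge 3$, since $(\tfrac32)^{n-2}\ge\tfrac94>2\ge\tfrac{m-1}{m-2}$; for $n=3$ it holds for $m\ge 4$ (with equality at $m=4$) but fails at $m=3$. The single leftover case $n=3$, $m=3$ is precisely Lemma \ref{merge}—equivalently $D=3ab\big(2-3(a+b)\big)\ge0$ because $a+b\le\tfrac23$—so it is already settled. The delicacy thus resides entirely at this boundary: the finite-difference representation is what turns the a priori unwieldy identity for $D$ into a single scalar comparison, and I expect verifying that comparison (together with isolating the borderline $(n,m)=(3,3)$ case) to be the crux of the argument.
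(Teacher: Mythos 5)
Your proof is correct, and its core mechanism is genuinely different from the paper's. Both arguments start from the same cancellation, writing the difference as $\sum_{k\ge 3}\psi(a,b,p_k)-\phi(a,b)$ (the paper writes this in expanded binomial form rather than your closed form), and both legitimately use the hypothesis $\sum_i p_i=1$ to identify $q_{n,2}$ with the sum over pairs of atoms. From there the paper proceeds combinatorially: it discards the manifestly nonnegative terms of index $k\le n-2$, and compares the $k=n-1$ term against $\phi(a,b)$ via the two-sided binomial estimate of Lemma \ref{binomineq},
\begin{equation*}
4\tfrac{n-1}{n}\,ab(a+b)^{n-2}\;\le\;(a+b)^n-a^n-b^n\;\le\;nab(a+b)^{n-2},
\end{equation*}
finishing with $\sum_{i\ge 3}p_i\ge\tfrac{m-2}{m}$ and $a+b\le\tfrac{2}{m}$ to get the lower bound $nab(a+b)^{n-3}\cdot\tfrac{2(m-3)}{m}\ge 0$; this is uniform in $(n,m)$ for $m\ge 3$ and in particular absorbs the boundary case $m=3$ with no case analysis. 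You instead replace Lemma \ref{binomineq} by the double-integral (smoothing) representation of $\phi$ and $\psi$, reducing everything to a pointwise comparison of the integrand, and your main bound uses only the ordering hypothesis $p_k\ge b\ge(r+s)/2$ rather than the full constraint $\sum_i p_i=1$. The price is that the resulting scalar inequality $(\tfrac32)^{n-2}\ge\tfrac{m-1}{m-2}$ fails exactly at $(n,m)=(3,3)$, which you correctly patch by invoking the already-proven Lemma \ref{merge}, of which that case is a literal special case, so there is no circularity. In exchange, your argument shows something slightly stronger for $n\ge 4$ (and for $n=3$, $m\ge 4$): the algebraic inequality $\sum_{k\ge 3}\psi(a,b,p_k)\ge\phi(a,b)$ holds under the ordering alone, with no normalization of the $p_i$, whereas the paper's final step as written invokes $\sum_i p_i=1$. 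In short: the paper buys uniformity at the cost of an auxiliary binomial lemma; you buy a more conceptual, representation-based proof that isolates where the ordering enters, at the cost of one boundary case handled separately.
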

\vspace{0.5cm}
The proof requires the following inequality:
\begin{lemma} \label{binomineq}
For $a,b>0$ and $n\geq 2$, 
\begin{equation}
4\big(\tfrac{n-1}{n}\big)ab(a+b)^{n-2}\leq(a+b)^n-a^n-b^n\leq nab(a+b)^{n-2}
\end{equation}
\end{lemma}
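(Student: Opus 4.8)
The plan is to expand both the middle expression and the two bounding expressions in the binomial basis $\{a^k b^{n-k}\}_{k=1}^{n-1}$ and then compare coefficients term by term. Since $a,b>0$ every monomial $a^k b^{n-k}$ is strictly positive, so it suffices to verify each of the two inequalities coefficientwise; this decouples the problem completely and reduces it to elementary facts about $k(n-k)$.

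First I would record the two expansions. By the binomial theorem,
\[
(a+b)^n-a^n-b^n=\sum_{k=1}^{n-1}\binom{n}{k}a^k b^{n-k},
\]
and, after reindexing $ab(a+b)^{n-2}=\sum_{j=0}^{n-2}\binom{n-2}{j}a^{j+1}b^{n-1-j}$ with $k=j+1$,
\[
ab(a+b)^{n-2}=\sum_{k=1}^{n-1}\binom{n-2}{k-1}a^k b^{n-k}.
\]
The key computation is the ratio of the two coefficients, which simplifies cleanly:
\[
\frac{\binom{n}{k}}{\binom{n-2}{k-1}}=\frac{n(n-1)}{k(n-k)},\qquad 1\le k\le n-1.
\]

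With this in hand the two bounds become single statements about $g(k):=k(n-k)$. The upper bound $(a+b)^n-a^n-b^n\le nab(a+b)^{n-2}$ reduces to $\binom{n}{k}\le n\binom{n-2}{k-1}$ for each $k$, i.e.\ $g(k)\ge n-1$; and the lower bound reduces to $\tfrac{4(n-1)}{n}\binom{n-2}{k-1}\le\binom{n}{k}$, i.e.\ $4g(k)\le n^2$. Both are immediate: the quadratic $g$ is concave on $[1,n-1]$, so over the integers $k\in\{1,\dots,n-1\}$ its minimum is attained at the endpoints $k=1$ and $k=n-1$ where $g=n-1$ (giving the first inequality), while $g(k)\le(n/2)^2=n^2/4$ by AM--GM (giving the second).

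I do not expect a genuine obstacle here; once the expressions are written in the binomial basis the inequality splits into independent elementary pieces, and the only care needed is the bookkeeping in the coefficient ratio and in the extremal behavior of $k(n-k)$. As a sanity check, equality in the upper bound occurs exactly at the extreme terms $k=1,n-1$, and equality in the lower bound at $k=n/2$ (possible only for even $n$); in particular for $n=2$ the single term $k=1$ forces all three expressions to coincide, since $(a+b)^2-a^2-b^2=2ab$, consistent with both bounds being tight there.
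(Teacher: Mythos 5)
Your proof is correct and follows essentially the same route as the paper: both expand $(a+b)^n-a^n-b^n$ and $ab(a+b)^{n-2}$ in the basis $\{a^kb^{n-k}\}$, reduce to the coefficient ratio $\binom{n}{k}/\binom{n-2}{k-1}=\tfrac{n(n-1)}{k(n-k)}$ (the paper writes this with the shifted index $(k+1)(n-k-1)$), and conclude by bounding $k(n-k)$ below by its endpoint value $n-1$ and above by $(n/2)^2$. The only difference is cosmetic indexing, plus your added equality-case remarks.
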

\begin{proof}
We have
\begin{equation}
(a+b)^n-a^n-b^n=\sum_{k=1}^{n-1}\binom{n}{k}a^{k}b^{n-k}=ab\sum_{k=0}^{n-2}\binom{n}{k+1}a^kb^{n-2-k}. \label{shiftb}
\end{equation}
Observe that
\begin{equation}
\binom{n}{k+1}=\frac{n(n-1)(n-2)!}{(k+1)k!(n-k-1)(n-k-2)!}=\frac{n(n-1)}{(k+1)(n-k-1)}\binom{n-2}{k};\\
\end{equation}
the denominator $(k+1)(n-k-1)$ is no greater than $(n/2)^2$, and is minimized at $k=0$ and $k=n-2$, so
\begin{equation} \label{nlb}
\binom{n}{k+1}\geq\frac{n(n-1)}{(n/2)^2}\binom{n-2}{k}=4\frac{n-1}{n}\binom{n-2}{k}
\end{equation}
and 
\begin{equation} \label{nub}
\binom{n}{k+1}\leq n\binom{n-2}{k}.
\end{equation}
The result follows by substituting inequalities \eqref{nlb} and \eqref{nub} into \eqref{shiftb} and appealing to the binomial theorem.\\
\end{proof}

\begin{proof}[Proof of Lemma \ref{nmerge}]
Let $a=p_1$ and $b=p_2$. We can compute $$q_{n,2}(a,b,p_3,\ldots,p_m)=\PP\big(K_n(a,b,p_3,\ldots,p_m)=2\big)$$ by conditioning on the appearance of the first two values:
\begin{equation}
q_{n,2}(a,b,p_3,\ldots,p_m)=\sum_{k=1}^{n-1}\binom{n}{k}a^k b^{n-k}+\sum_{k=1}^{n-1}\binom{n}{k}a^k\sum_{i=3}^m p_i^{n-k}+\sum_{k=1}^{n-1}\binom{n}{k}b^k\sum_{i=3}^m p_i^{n-k}+\sum_{3\leq i<j\leq m}\sum_{k=1}^{n-1}\binom{n}{k}p_i^kp_j^{n-k}.
\end{equation}
Note that the first term, which is an expression for the probability that the first two values both appear and are the only ones to appear in the first $n$ observations, is also equal to $(a+b)^n-a^n-b^n$. Similarly,
\begin{equation}
q_{n,2}(a+b,p_3,\ldots,p_m)=\sum_{k=1}^{n-1}\binom{n}{k}(a+b)^k\sum_{i=3}^m p_i^{n-k}+\sum_{3\leq i<j\leq m}\sum_{k=1}^{n-1}\binom{n}{k}p_i^kp_j^{n-k}.
\end{equation}
For $m\geq 3$, the difference after appropriate cancellations and then applying Lemma \ref{binomineq} is
\begin{align}
q_{n,2}&(a+b,p_3,\ldots,p_m)-q_{n,2}(a,b,p_3,\ldots,p_m)=\sum_{k=1}^{n-1}\binom{n}{k}\big[(a+b)^k-a^k-b^k\big]\sum_{i=3}^m p_i^{n-k}-\sum_{k=1}^{n-1}\binom{n}{k}a^k b^{n-k}\\
&=\underbrace{\sum_{k=1}^{n-2}\binom{n}{k}\big[(a+b)^k-a^k-b^k\big]\sum_{i=3}^m p_i^{n-k}}_{\geq 0}+n\underbrace{\big[(a+b)^{n-1}-a^{n-1}-b^{n-1}\big]}_{\geq 4(\frac{n-2}{n-1})ab(a+b)^{n-3}\geq 2ab(a+b)^{n-3}}\sum_{i=3}^m p_i-\underbrace{\big[(a+b)^n-a^n-b^n\big]}_{\leq nab(a+b)^{n-2}}\\
&\geq nab(a+b)^{n-3}\Big[2\sum_{i=3}^m p_i-(a+b)\Big].
\end{align}
Since $\sum_{i=1}^m p_i=1$ and $a\leq b\leq p_3\leq\ldots\leq p_m$, it follows that $\sum_{i=3}^m p_i\geq \frac{m-2}{m}$ and $a+b\leq\frac{2}{m}$, so
\begin{equation}
2\sum_{i=3}^m p_i-(a+b)\geq 2\Big(\frac{m-2}{m}\Big)-\frac{2}{m}=\frac{2(m-3)}{m}\geq 0
\end{equation}
and therefore merging the two smallest values among $\{p_1,\ldots,p_m\}$ does not decrease $q_{n,2}$ provided that there are at least 3 nonzero values.
\end{proof}
\begin{lemma} \label{fullbetter}
For any $(p_1,\ldots,p_m)$ and $n\geq 3$,
\begin{equation}
q_{n,2}(p_1,\ldots,p_m,p_*)\geq q_{n,2}(p_1,\ldots,p_m)
\end{equation}
where $p_*:=1-\sum_{i=1}^m p_i$.
\end{lemma}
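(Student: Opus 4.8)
The plan is to compute both sides explicitly, exploiting the probabilistic meaning of $q_{n,2}$ together with the role of $p_*$ as the weight of the continuous component of the sampling law. When we draw $n$ i.i.d. values from a distribution with atoms $(p_1,\ldots,p_m)$ and continuous mass $p_* = 1 - \sum p_i$, every draw landing in the continuous part is almost surely a new, distinct value; passing to $(p_1,\ldots,p_m,p_*)$ simply reinterprets that leftover mass as one more atom. So the whole lemma is a comparison between treating $p_*$ as diffuse versus concentrating it at a single point.

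First I would decompose the event $\{K_n = 2\}$ for the distribution $(p_1,\ldots,p_m)$ carrying continuous mass $p_*$. Since $n \geq 3$, any configuration with two or more continuous draws already has $K_n \geq 3$ (two distinct continuous values plus at least one further draw), so only two scenarios contribute: (a) no continuous draw occurs and exactly two distinct atoms appear, and (b) exactly one continuous draw occurs while the remaining $n-1$ draws all coincide with a single common atom. This gives
\begin{equation}
q_{n,2}(p_1,\ldots,p_m) = \sum_{1 \leq i < j \leq m}\big[(p_i+p_j)^n - p_i^n - p_j^n\big] + n\,p_*\sum_{i=1}^m p_i^{n-1},
\end{equation}
the first sum being the probability that all $n$ draws fall on two appearing atoms and the second term capturing scenario (b). In contrast, treating $p_*$ as the atom $p_{m+1}$ makes the whole distribution atomic, so
\begin{equation}
q_{n,2}(p_1,\ldots,p_m,p_*) = \sum_{1 \leq i < j \leq m}\big[(p_i+p_j)^n - p_i^n - p_j^n\big] + \sum_{i=1}^m\big[(p_i+p_*)^n - p_i^n - p_*^n\big].
\end{equation}

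The atom--atom pair terms are identical in both expressions, so after cancellation the claim reduces to the termwise inequality
\begin{equation}
(p_i+p_*)^n - p_i^n - p_*^n \geq n\,p_*\,p_i^{n-1} \qquad \text{for each } i,
\end{equation}
which is immediate from the binomial theorem: $(p_i+p_*)^n - p_i^n - p_*^n = \sum_{k=1}^{n-1}\binom{n}{k}p_i^k p_*^{n-k}$, and the single term $k=n-1$ already equals $n\,p_*\,p_i^{n-1}$, with all remaining terms nonnegative since $p_i,p_* \geq 0$. Summing over $i$ and restoring the common pair terms finishes the proof.

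I expect the only real obstacle to be the bookkeeping in the first step: correctly enumerating which configurations yield exactly two distinct values in the presence of a continuous component, and in particular recognizing that for $n \geq 3$ two or more continuous draws always force $K_n \geq 3$, so scenario (b) is the sole mixed contribution. Once the two formulas are in hand the comparison is a one-line binomial estimate. As a sanity check on the direction of the inequality, there is a clean coupling reading: sending every continuous draw to the common value $p_*$ can only merge distinct values, and one verifies it never collapses a genuine $\{K_n = 2\}$ outcome down to $K_n = 1$ (in scenarios (a) and (b) the two surviving values are never both continuous), so the mass at $\{K_n = 2\}$ can only increase.
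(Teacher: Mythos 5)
Your proof is correct and follows essentially the same route as the paper: both compute $q_{n,2}$ for the two distributions by decomposing over which values appear (atom--atom pairs plus the single-continuous-draw scenario), cancel the common pair terms, and observe that $(p_i+p_*)^n-p_i^n-p_*^n=\sum_{k=1}^{n-1}\binom{n}{k}p_i^kp_*^{n-k}$ dominates its $k=n-1$ term $np_*p_i^{n-1}$. The only differences are cosmetic (you write the pair terms via the binomial identity rather than as explicit sums, and add a coupling heuristic), so there is nothing to flag.
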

\begin{proof}
We have 
\begin{equation}
q_{n,2}(p_1,\ldots,p_m)=\sum_{1\leq i<j\leq m}\sum_{k=1}^{n-1}\binom{n}{k}p_i^kp_j^{n-k}+\sum_{i=1}^m np_i^{n-1}p_*
\end{equation}
and
\begin{equation}
q_{n,2}(p_1,\ldots,p_m,p_*)=\sum_{1\leq i<j\leq m}\sum_{k=1}^{n-1}\binom{n}{k}p_i^kp_j^{n-k}+\sum_{i=1}^m\sum_{k=1}^{n-1}\binom{n}{k}p_i^{k}p_*^{n-k},
\end{equation}
so 
\begin{equation}
q_{n,2}(p_1,\ldots,p_m,p_*)-q_{n,2}(p_1,\ldots,p_m)=\sum_{i=1}^m\sum_{k=1}^{n-2}p_i^kp_*^{n-k}\geq 0.
\end{equation}
\end{proof}
\begin{theorem} \label{n2bound}
For any exchangeable sequence of random variables $(X_1,X_2,\ldots)$ and any $n\geq 3$,
\begin{equation}
\PP(K_n=2)\leq1-2^{-(n-1)}.
\end{equation}
\end{theorem}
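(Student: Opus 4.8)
The plan is to mirror the proof of Proposition \ref{ofbound} almost verbatim, now using the generalized merging machinery just developed. As in that argument, I would first invoke de Finetti's theorem to reduce to the i.i.d.\ case: since $\PP(K_n=2)$ for a general exchangeable sequence is a convex combination of its values for i.i.d.\ sequences and the bound $1-2^{-(n-1)}$ is a fixed constant, it suffices to establish $q_{n,2}(p_1,p_2,\ldots)\leq 1-2^{-(n-1)}$ for every ranked discrete distribution. Because $q_{n,2}(p_1,p_2,\ldots)=\lim_{m\to\infty}q_{n,2}(p_1,\ldots,p_m)$, exactly the continuity already used in the proofs of Proposition \ref{ofbound} and Theorem \ref{mainthm}, it is in turn enough to bound $q_{n,2}$ on finite distributions $(p_1,\ldots,p_m)$.

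The reduction to the extremal configuration then proceeds in two stages. First, if $\sum_{i=1}^m p_i<1$, Lemma \ref{fullbetter} lets me append the leftover mass $p_*=1-\sum_{i=1}^m p_i$ as an additional atom without decreasing $q_{n,2}$, so I may assume $\sum_{i=1}^m p_i=1$. Second, with $\sum p_i=1$ now in force, Lemma \ref{nmerge} guarantees that merging the two smallest atoms does not decrease $q_{n,2}$ as long as at least three atoms remain; repeating this step collapses the distribution down to two atoms. This leaves exactly the case $m=2$ with $p_1+p_2=1$ to analyze.

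For two atoms summing to $1$, the event $\{K_n=1\}$ is just the event that all $n$ draws coincide, so $q_{n,1}(p_1,p_2)=p_1^n+p_2^n$ and hence $q_{n,2}(p_1,p_2)=1-p_1^n-p_2^n$. Writing $p_1=p$ and $p_2=1-p$, I would maximize $1-p^n-(1-p)^n$ over $p\in[0,1]$; since $t\mapsto t^n$ is convex, $p^n+(1-p)^n$ is minimized at $p=\tfrac12$, giving the value $2\cdot 2^{-n}=2^{-(n-1)}$. Therefore $q_{n,2}(p_1,p_2)\leq 1-2^{-(n-1)}$, with equality at $\boldsymbol{u}_2$, and the bound propagates back through the limit and the convex combination.

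I expect no serious obstacle here, as the heavy lifting is done by Lemmas \ref{nmerge} and \ref{fullbetter}; the only points requiring care are bookkeeping ones, namely ensuring that the hypotheses $\sum p_i=1$ and $m\geq 3$ of the merging lemma are restored in the correct order (hence applying Lemma \ref{fullbetter} before Lemma \ref{nmerge}), and checking that merges involving zero-valued atoms are harmless since they leave $q_{n,2}$ unchanged. It is worth noting that the restriction $n\geq 3$ is genuinely needed: for $n=2$ the merging inequality of Lemma \ref{nmerge} reverses, and spreading mass across many atoms drives $\PP(K_2=2)$ toward $1$, which is precisely why no analogous bound below $1$ can hold in that case.
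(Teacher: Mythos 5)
Your proposal is correct and follows essentially the same route as the paper's own proof: reduction to i.i.d.\ sequences by convexity, then to finite distributions by continuity, then applying Lemma \ref{fullbetter} to restore total mass $1$ before repeatedly merging via Lemma \ref{nmerge} down to the two-atom case $q_{n,2}(p_1,p_2)=1-p_1^n-p_2^n$, maximized at $\boldsymbol{u}_2$. Your added care about the order of the two lemmas and the remark on why $n\geq 3$ is needed are sound but do not change the argument.
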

\begin{proof}
As in the proof of Proposition \ref{ofbound}, it suffices to show that $q_{n,2}(p_1,\ldots,p_m)\leq 1-2^{-(n-1)}$ for any $(p_1,\ldots,p_m)$. If $m=2$ and $p_1+p_2=1$, then
\begin{equation}
q_{n,2}(p_1,p_2)=1-p_1^n-p_2^n
\end{equation}
which attains its maximum of $1-2^{-(n-1)}$ at $p_1=p_2=\tfrac{1}{2}$. For $m\geq 3$, by Lemmas \ref{nmerge} and \ref{fullbetter} we have
\begin{equation}
q_{n,2}(p_1,\ldots,p_m)\leq q_{n,2}(p_1,\ldots,p_m,p_*)\leq q_{n,2}\big(\tfrac{1}{2},\tfrac{1}{2}\big)=1-2^{-(n-1)}.
\end{equation}
\end{proof}
The difficulty in extending the proof of Theorem \ref{mainthm}(ii) to apply to Conjecture \ref{mainconj}(ii) is that there is no simple generalization of Lemma \ref{genmerge} to higher dimensions. Lemma \ref{genmerge} is essential because it asserts that whether merging two values in a discrete distribution increases, decreases, or preserves the functionals $L_N$ is determined by only the sum of the two value to be merged. The corresponding functionals for the higher dimensional problem are more complicated and do not have the same convenient property.\\

Still, a first step would be to verify Conjecture \ref{mainconj}(i), that the set of extreme points of $\conv(V_n)$ is precisely $V_n$. Observe that
\begin{equation}
\boldsymbol{v}_{n,m}=\Big(\frac{S(n,k)(m)_{k\downarrow}}{m^n}:1\leq k\leq n\Big)
\end{equation}
where $S(n,k)$ denotes a Stirling number of the second kind, and $(m)_{k\downarrow}$ is the falling factorial
\begin{equation}
(m)_{k\downarrow}:=m(m-1)\cdots(m-k+1)=\frac{m!}{(m-k)!}.
\end{equation}
This claim that the $\boldsymbol{v}_{n,m}$, $m=1,2,\ldots,\infty$ are the extreme points of an $(n-1)$-dimensional convex body in $\R^n$ does not seem to be recorded anywhere in the vast literature on Stirling numbers. We have verified computationally using SciPy's spatial module that $\{\boldsymbol{v}_{n,m}:1\leq m\leq 30\}$ is the set of the extreme points of its own convex hull for $n\leq 7$, but numerical precision becomes an issue for larger values of $m$ and $n$.

\section{Finite exchangeable sequences} \label{finitesection}
In this section, we consider the distribution of $K_n$ for a \textit{finite exchangeable sequence} $(X_1,\ldots,X_m)$ with $m\geq n$. Note the deviation from the original problem: the first $m$ terms of an infinite exchangeable sequence always form a finite exchangeable sequence, but a finite exchangeable sequence need not have an embedding into an infinite one, nor one with more terms. Therefore, the set of possible laws of $K_n$ for finite exchangeable sequences $(X_1,\ldots,X_m)$ form decreasing nested subsets for $m\geq n$, all of which contain that for infinite exchangeable sequences. To analyze this problem, we shift to the framework of \textit{exchangeable random partitions}, for which we provide some background below. \\\\
A \textit{partition} of $[m]:=\{1,\ldots,m\}$ is an unordered collection of disjoint non-empty subsets $\{A_i\}$ of $[m]$ with $\bigcup_i A_i=[m]$. The $A_i$ are called the \textit{clusters} of the partition. The \textit{restriction} of a partition $\{A_i\}$ of $[m]$ to $[n]$ where $n<m$ is the partition of $[n]$ whose clusters are the nonempty members of $\{A_i\cap[n]\}$.\\\\
Any infinite sequence of random variables $(X_1,X_2,\ldots)$ induces a random partition of $\N$ according to the relation $i\sim j$ if and only if $X_i=X_j$. More precisely, a random partition $\Pi$ of $\N$ is a sequence $(\Pi_m)$ where for each $m$, $\Pi_m$ is a random partition of $[m]$, and for $n<m$, the restriction of $\Pi_m$ to $[n]$ is $\Pi_n$. For the random partition $\Pi$ of $\N$ induced by a sequence $(X_1,X_2,\ldots)$, the clusters of $\Pi_m$ are the indices associated to each distinct value among $\{X_1,\ldots,X_m\}$. For example, if $$(X_1(\omega),X_2(\omega),\ldots)=(7,6,7,8,8,7\ldots),$$ then $$\Pi_1(\omega)=\{\{1\}\},\qquad \Pi_2(\omega)=\{\{1\},\{2\}\},\qquad \Pi_3(\omega)=\{\{1,3\},\{2\}\},$$ $$\Pi_4(\omega)=\{\{1,3\},\{2\},\{4\}\}\qquad \Pi_5(\omega)=\{\{1,3\},\{2\},\{4,5\}\}\qquad \Pi_6(\omega)=\{\{1,3,6\},\{2\},\{4,5\}\}.$$\\
Observe that $K_n$ as previously defined for a sequence $(X_1,X_2,\ldots)$ counts the number of clusters of $\Pi_n$ for the associated partition $\Pi$.
When $(X_1,X_2,\ldots)$ is exchangeable, it induces an \textit{exchangeable random partition} $\Pi$ of $\N$, meaning that for each $m$, the distribution of $\Pi_m$ is invariant under any deterministic permutation of $[m]$. In this scenario, associated to $\Pi$ is a function $p$ defined for all finite sequences of positive integers such that for any $m$ and any partition $\{A_1,\ldots,A_k\}$ of $[m]$, 
\begin{equation}
\PP(\Pi_m=\{A_1,\ldots,A_k\})=p(\lvert A_1\rvert,\ldots,\lvert A_k\rvert).
\end{equation}
Here $p$ is called the \textit{exchangeable partition probability function (EPPF)} associated to $\Pi$. A consequence of exchangeability is that the EPPF is a symmetric function of its arguments. The probability mass function for $K_n$ can therefore be expressed in terms of the EPPF as
\begin{equation} \label{cpcombo}
\PP(K_n=k)=\sum_{\substack{n_1+\ldots+n_k=n\\ n_1\geq\ldots\geq n_k\geq 1}}C(n_1,\ldots,n_k)p(n_1,\ldots,n_k)
\end{equation} 
where
\begin{equation} \label{cformula}
C(n_1,\ldots,n_k):=\frac{n!}{\prod_{j=1}^n(j!)^{s_j}s_j!},\qquad s_j=s_j(n_1,\ldots,n_k):=\#\{i:n_i=j\}
\end{equation}
counts the number of partitions of $[n]$ whose cluster sizes in descending order are given by $n_1,\ldots,n_k$. Furthermore, the EPPF $p$ must satisfy the following \textit{consistency} relation:
\begin{equation} \label{eppfrec}
p(n_1,\ldots,n_k)=p(n_1,\ldots,n_k,1)+\sum_{i=1}^k p(n_1,\ldots,\ n_i+1\ ,\ldots,n_k).
\end{equation}

Reposed in this alternate framework, the goal of this section is to understand the possible distributions of $K_n=K_n(\Pi_m)$ for an exchangeable random partition $\Pi_m$ of $[m]$ for $m\geq n$, meaning the number of clusters of the restriction $\Pi_{m\downarrow n}$ of $\Pi_m$ to $[n]$. A consequence of the exchangeability of $\Pi_m$ is that $\Pi_{m\downarrow n}$ is an exchangeable random partition of $[n]$, whose EPPF is the unique extension of the EPPF for $\Pi_m$ to positive integer compositions of $n$ according to the consistency relations \eqref{eppfrec}. Note that for $m=n$, $K_n(\Pi_n)$ can have any general probability distribution on $[n]$: for example, given such a probability distribution $(a_1,\ldots,a_n)$, define an EPPF according to
\begin{equation}
p(n-k+1,\underbrace{1,\ldots,1}_{k-1\text{ singletons}})=\frac{a_k}{\binom{n}{k-1}},\qquad k=1,\ldots,n
\end{equation}
where the rest of the values are either 0 or specified by symmetry. By construction, $p$ corresponds to an exchangeable random partition of $[n]$ such that $\PP(K_n=k)=a_n$ for $1\leq k\leq n$. However, for $m>n$, the consistency relations \eqref{eppfrec} must be satisfied, so it is not immediately clear given $n$ and $m>n$ what restrictions there are on the distribution of $K_n$, if any.
\begin{proposition} \label{sharpnnm1}
For $n\geq 3$, we have the sharp bound
\begin{equation}
\PP(K_n(\Pi_{n+1})=n-1)\leq\frac{\max\{4,n-1\}}{n+1}
\end{equation}
\end{proposition}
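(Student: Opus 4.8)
The plan is to translate everything into the EPPF of $\Pi_{n+1}$ and reduce the bound to a three–variable linear program. First, by \eqref{cpcombo} the only partition of $[n]$ into $n-1$ clusters has type $(2,1^{n-2})$ (one pair and $n-2$ singletons), so
$$\PP(K_n(\Pi_{n+1})=n-1)=C(2,1^{n-2})\,p(2,1^{n-2})=\binom{n}{2}\,p(2,1^{n-2}),$$
where $p$ is the EPPF of the restriction $\Pi_{n+1\downarrow n}$. This restriction EPPF, evaluated at a composition of $n$, is recovered from the genuine level-$(n+1)$ EPPF of $\Pi_{n+1}$ through the consistency relation \eqref{eppfrec}. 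Applying \eqref{eppfrec} to $(2,1^{n-2})$ and collecting symmetric terms — one term from appending a singleton, one from incrementing the pair, and $n-2$ equal terms from incrementing the singletons — gives
$$p(2,1^{n-2})=a+b+(n-2)c,\qquad a:=p(2,1^{n-1}),\ \ b:=p(3,1^{n-2}),\ \ c:=p(2,2,1^{n-3}),$$
so that $a,b,c$ are EPPF values of $\Pi_{n+1}$ at the types of $[n+1]$ with one pair, one triple, and two pairs respectively (all remaining cells singletons).

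Next I would use the only constraint needed: the partition probabilities of $[n+1]$ are nonnegative and sum to $1$. Grouping by type via \eqref{cformula} and discarding the (nonnegative) contributions of the other types yields the budget inequality
$$\binom{n+1}{2}a+\binom{n+1}{3}b+3\binom{n+1}{4}c\le 1,$$
whose coefficients are $C(2,1^{n-1})$, $C(3,1^{n-2})$, $C(2,2,1^{n-3})$. Maximizing $\binom{n}{2}\bigl(a+b+(n-2)c\bigr)$ over $a,b,c\ge 0$ subject to this single constraint is a linear program whose optimum is the largest value-per-budget ratio among the three variables, and a short computation gives these as
$$\frac{\binom{n}{2}}{\binom{n+1}{2}}=\frac{n-1}{n+1},\qquad \frac{\binom{n}{2}}{\binom{n+1}{3}}=\frac{3}{n+1},\qquad \frac{\binom{n}{2}(n-2)}{3\binom{n+1}{4}}=\frac{4}{n+1}.$$
Hence the maximum equals $\frac{\max\{n-1,3,4\}}{n+1}=\frac{\max\{4,n-1\}}{n+1}$, the asserted upper bound.

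For sharpness I would exhibit explicit exchangeable random partitions of $[n+1]$ saturating the bound. When $n\ge 5$ the binding variable is $a$: the uniform law on the $\binom{n+1}{2}$ one-pair partitions (manifestly exchangeable) gives $a=1/\binom{n+1}{2}$, $b=c=0$, and attains $\frac{n-1}{n+1}$; restricting to $[n]$, a cluster is lost exactly when the pair contains $n+1$, which confirms the value. When $n\le 5$ the binding variable is $c$: the uniform law on the two-pair partitions attains $\frac{4}{n+1}$, and at $n=5$ both constructions give the common value $\tfrac23$. The main obstacle, and the only place demanding care, is the combinatorial bookkeeping in these two reductions: getting the multiplicity $n-2$ right in the expansion of $p(2,1^{n-2})$ under \eqref{eppfrec}, and computing the type counts $C$ so that the three budget coefficients, and hence the three ratios, come out correctly. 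Once the ratios are in hand, the linear program is immediate, and noticing that the middle ratio $3/(n+1)$ is always dominated — so that only the switch between the one-pair and two-pair extremes matters — is precisely what produces the $\max\{4,n-1\}$.
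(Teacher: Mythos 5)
Your proof is correct and takes essentially the same route as the paper: the same expansion $p(2,1^{n-2})=p(2,1^{n-1})+p(3,1^{n-2})+(n-2)p(2,2,1^{n-3})$ via \eqref{eppfrec}, the same one-constraint linear program, and the same three ratios; your budget coefficients $\binom{n+1}{2}$, $\binom{n+1}{3}$, $3\binom{n+1}{4}$, read off directly from the level-$(n+1)$ normalization, agree with the paper's $\binom{n}{2}+n$, $\binom{n}{2}+\binom{n}{3}$, $\binom{n}{2}(n-2)+3\binom{n}{4}$ by Pascal's rule, and your uniform-on-a-single-type witnesses make the sharpness explicit where the paper leaves it implicit in the reduction. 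One small slip in a side remark: for the uniform law on one-pair partitions of $[n+1]$, a cluster is lost under restriction exactly when the pair does \emph{not} contain $n+1$ (the singleton $\{n+1\}$ then disappears), whereas if the pair contains $n+1$ the restriction is $n$ singletons and $K_n=n$; the relevant probability is still $\binom{n}{2}/\binom{n+1}{2}=(n-1)/(n+1)$, as you computed, so nothing downstream is affected.
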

\begin{proof}
We have
\begin{equation} \label{nnm1}
\PP(K_n=n-1)=\binom{n}{2}p(2,1^{n-2})=\binom{n}{2}\big[p(3,1^{n-2})+(n-2)p(2,2,1^{n-3})+p(2,1^{n-1})\big]
\end{equation}
We consider the appearance of each of the three terms $p(3,1^{n-2})$, $p(2,2,1^{n-3})$, and $p(2,1^{n-1})$ in the expansion \eqref{eppfrec} of $p(n_1,\ldots,n_k)$ for $(n_1,\ldots,n_k)$ with $\sum_{i=1}^k n_i=n$ and $n_1\geq\ldots\geq n_k\geq 1$.
\begin{itemize}
\item $p(3,1^{n-2})$ appears in the expansion of only $p(2,1^{n-2})$ with coefficient $1$ and $p(3,1^{n-3})$ with coefficient 1. $p(3,1^{n-3})$ appears in the expansion of $\PP(K_n=n-2)$ according to \eqref{cpcombo} with coefficient $C(3,1^{n-3})=\binom{n}{3}$.
\item $p(2,2,1^{n-3})$ appears in the expansion of only $p(2,1^{n-2})$ with coefficient $n-2$ and $p(2,2,1^{n-4})$ with coefficient 1. $p(2,2,1^{n-4})$ appears in the expansion of $\PP(K_n=n-2)$ according to $\eqref{cpcombo}$ with coefficient $C(2,2,1^{n-4})=3\binom{n}{4}$.
\item $p(2,1^{n-1})$ appears in the expansion of only $p(2,1^{n-2})$ with coefficient $1$ and $p(1^{n})$ with coefficient $n$. $p(1^n)$ appears in the expansion of $\PP(K_n=n)$ with coefficient $C(1^n)=1$.
\end{itemize}
Hence the problem reduces to maximizing \eqref{nnm1} subject to the linear constraints
\begin{equation}
\bigg[\binom{n}{2}+\binom{n}{3}\bigg]p(3,1^{n-2})+\bigg[\binom{n}{2}(n-2)+3\binom{n}{4}\bigg]p(2,2,1^{n-3})+\bigg[\binom{n}{2}+n\bigg]p(2,1^{n-1})\leq 1.
\end{equation}
The maximum value of \eqref{nnm1} is evidently equal to
\begin{equation}
\max\bigg\{\frac{\binom{n}{2}}{\binom{n}{2}+\binom{n}{3}},\frac{\binom{n}{2}(n-2)}{\binom{n}{2}(n-2)+3\binom{n}{4}},\frac{\binom{n}{2}}{\binom{n}{2}+n}\bigg\},
\end{equation}
and simplifying each of the three expressions yields
\begin{equation}
\max\Big\{\frac{3}{n+1},\frac{4}{n+1},\frac{n-1}{n+1}\Big\}=\frac{\max\{4,n-1\}}{n+1}.
\end{equation} 
\end{proof}
It follows from Proposition \ref{sharpnnm1} that for $n=3$, there are no restrictions on the distribution of $K_3(\Pi_4)$ on $\{1,2,3\}$. The corresponding claim cannot be made for $n\geq 4$, as $\PP(K_4(\Pi_5)=3)\leq\frac{4}{5}$ and $\PP(K_n(\Pi_{n+1})=n-1)\leq\frac{n-1}{n+1}$ for $n\geq 5$.\\

The remainder of the section will focus on $K_3(\Pi_n)$ for $n\geq 3$. Intuitively, as $n\rightarrow\infty$, the set of probability distributions of $K_3(\Pi_n)$ should tend to the corresponding set for $K_3(\Pi)$ for exchangeable random partitions $\Pi$ of $\N$, which was explicitly characterized in Section 2. We proceed by fixing $n\geq 3$, and as before, consider the parameterization $q_1=\PP(K_3(\Pi_n)=1)$ and $q_3=\PP(K_3(\Pi_n)=3)$. By repeated application of \eqref{eppfrec}, $q_1$ and $q_3$ may be written in terms of the EPPF as
\begin{equation}
q_1=p(3)=\sum_{\substack{1\leq k\leq n\\n_1+\ldots+n_k=n\\ n_1\geq\ldots\geq n_k\geq 1}}A(n_1,\ldots,n_k)p(n_1,\ldots,n_k)
\end{equation}
and
\begin{equation}
q_3=p(1,1,1)=\sum_{\substack{1\leq k\leq n\\ n_1+\ldots+n_k=n\\ n_1\geq\ldots\geq n_k\geq 1}}B(n_1,\ldots,n_k)p(n_1,\ldots,n_k)
\end{equation}
for uniquely defined nonnegative integer coefficients $A(n_1,\ldots,n_k)$ and $B(n_1,\ldots,n_k)$.The problem is to describe the set of points $(q_1,q_3)$ arising in this manner subject to 
\begin{equation}
\sum_{\substack{1\leq k\leq n\\ n_1+\ldots+n_k=n\\ n_1\geq\ldots\geq n_k\geq 1}}C(n_1,\ldots,n_k)p(n_1,\ldots,n_k)=1
\end{equation}
where $C(n_1,\ldots,n_k)$ is as defined in \eqref{cformula}. Observe that, in vector notation,
\begin{align}
(q_1,q_3)&=\Big(\sum A(n_1,\ldots,n_k)p(n_1,\ldots,n_k), \sum B(n_1,\ldots,n_k)p(n_1,\ldots,n_k)\Big)\\
&=\sum C(n_1,\ldots,n_k)p(n_1,\ldots,n_k)\Big(\tfrac{A(n_1,\ldots,n_k)}{C(n_1,\ldots,n_k)},\tfrac{B(n_1,\ldots,n_k)}{C(n_1,\ldots,n_k)}\Big)
\end{align}
This shows that any $(q_1,q_3)$ is a convex combination of points of the form $\big(\frac{A(\boldsymbol{n})}{C(\boldsymbol{n})},\frac{B(\boldsymbol{n})}{C(\boldsymbol{n})}\big)$, and thus the set of probability distributions of $K_3(\Pi_n)$ over all exchangeable random partitions $\Pi_n$ of $[n]$, expressed in the parametrization $(q_1,q_3)$, is the convex hull of the finite set of points
\begin{equation}
S_n:=\Big\{\Big(\tfrac{A(n_1,\ldots,n_k)}{C(n_1,\ldots,n_k)},\tfrac{B(n_1,\ldots,n_k)}{C(n_1,\ldots,n_k)}\Big):1\leq k\leq n, n_1+\ldots+n_k=1,n_1\geq\ldots\geq n_k\geq 1\Big\}.
\end{equation}
\vspace{-1cm}
\begin{figure}[h!]
\centering
  \centering
  \includegraphics[width=0.7\linewidth]{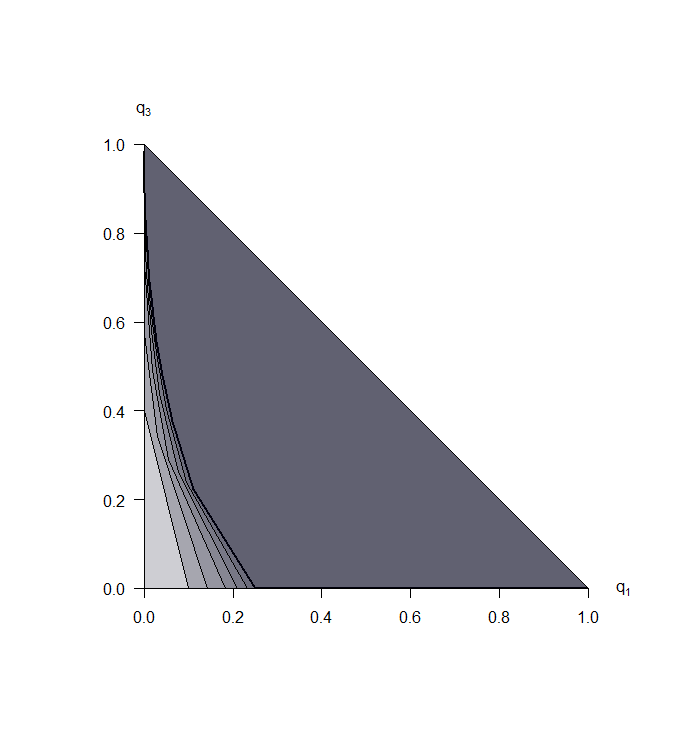}
  \vspace{-1.7cm}
  \captionsetup{width=0.5\linewidth}
  \captionof{figure}{The nested regions are the possible probability distributions of $K_3(\Pi_n)$ for $\Pi_n$ an exchangeable random partition of $[n]$ for $n=4,5,7,12,19,41$, which tend to the region corresponding to $K_3$ for infinite exchangeable sequences, as described in Theorem \ref{mainthm} and shown in Figure \ref{segments}.} 
  \label{457}
\end{figure}\\\\
\newpage
Listed below is the sequence $(s_n)$ for the number of extreme points of the convex hull of $S_n$, $n\geq 3$:

\begin{table}[h]
\begin{center}
\begin{tabular}{|c||c|c|c|c|c|c|c|c|c|c|c|c|c|c|c|c|c|c|c|c|c|c|c|c|}
\hline
$n$ & 3 & 4 & 5 & 6 & 7 & 8 & 9 & 10 & 11 & 12 & 13 & 14 & 15 & 16 & 17 & 18 & 19 & 20 & 21 & 22 & 23
\\ \hline
$s_n$ & 3 & 3 & 4 & 4 & 5 & 5 & 6 & 6 & 7 & 6 & 8 & 7 & 8 & 8 & 9 & 8 & 10 & 9 & 10 & 10 & 11\\
\hline
\end{tabular}
\newline\newline\newline
\begin{tabular}{|c||c|c|c|c|c|c|c|c|c|c|c|c|c|c|c|c|c|c|c|c|c|c|c|c|}
\hline
$n$ & 24 & 25 & 26 & 27 & 28 & 29 & 30 & 31 & 32 & 33 & 34 & 35 & 36 & 37 & 38 & 39 & 40 & 41
\\ \hline
$s_n$ & 9 & 12 & 11 & 11 & 11 & 13 & 11 & 13 & 12 & 13 & 13 & 14 & 12 & 15 & 14 & 14 & 13 & 16 
\\ \hline
\end{tabular}
\end{center}
\end{table}

\section{The two-parameter family} \label{twoparamsection}
It was shown in \cite{MR1337249} that any pair of real parameters $(\alpha,\theta)$ satisfying either of the conditions
\begin{align}
&\text{(i)}\ \ 0\leq\alpha<1 \text{ and } \theta>-\alpha \text{;  or}\\
&\text{(ii)}\ \ \alpha<0 \text{ and } \theta=-m\alpha \text{ for some } m\in\N
\end{align}
correspond to an exchangeable random partition $\Pi_{\alpha,\theta}=(\Pi_n)$ of $\N$ according to the following sequential construction known as the Chinese restaurant process: for each $n\in\N$, conditionally given $\Pi_n=\{C_1,\ldots,C_k\}$, $\Pi_{n+1}$ is formed by having $n+1$
\begin{equation}
\begin{split}
&\text{attach to cluster }C_i\text{ with probability }\frac{\lvert C_i\rvert-\alpha}{n+\theta},\ \ 1\leq i\leq k\ ;\\
&\text{form a new cluster with probability }\frac{\theta+k\alpha}{n+\theta}.
\end{split}
\end{equation}
The corresponding EPPF is given by
\begin{equation} \label{ateppf}
p_{\alpha,\theta}(n_1,\ldots,n_k)=\frac{\prod_{i=0}^{k-1}(\theta+i\alpha)\prod_{j=1}^k(1-\alpha)_{n_j-1}}{(\theta)_n}
\end{equation} 
where $n=n_1+\ldots+n_k$ and
\begin{equation}
(x)_m:=x(x+1)\cdots(x+m-1)=\frac{\Gamma(x+m)}{\Gamma(x)}.
\end{equation}
Let $\PP_{\alpha,\theta}$ denote the law of $\Pi_{\alpha,\theta}$.
\begin{figure}[h!]
\vspace{-1cm}
\centering
  \centering
  \includegraphics[width=0.7\linewidth]{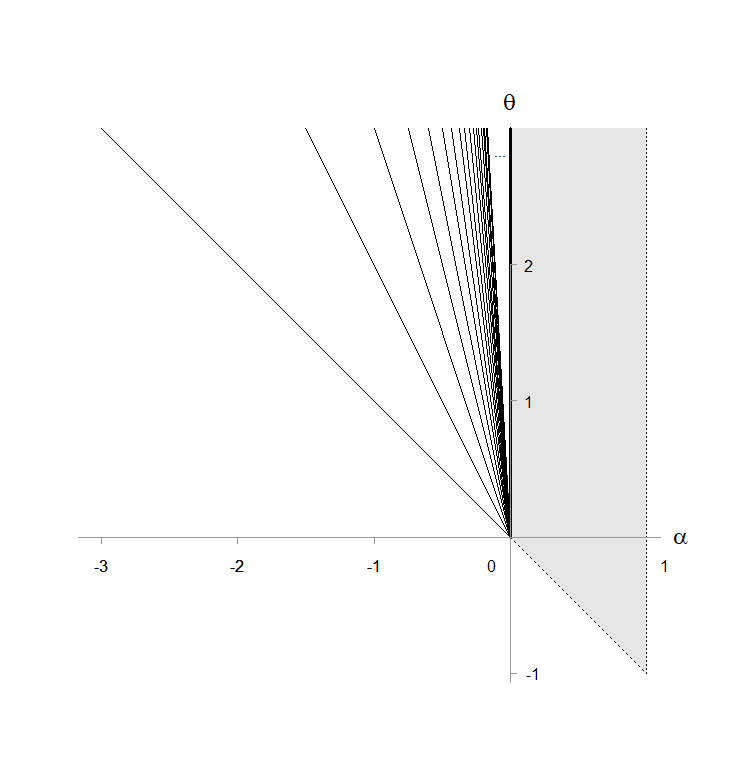}
  \captionsetup{width=0.8\linewidth}
  \vspace{-1.2cm}
  \captionof{figure}{The $(\alpha,\theta)$ parameter space.} 
  \label{atpspace}
\end{figure}
The distribution of $K_3$ for $\Pi_{\alpha,\theta}$ is given by
\begin{align}
q_1(\alpha,\theta)&=\frac{(1-\alpha)(2-\alpha)}{(1+\theta)(2+\theta)} \label{q1at}\\
q_2(\alpha,\theta)&=\frac{3(1-\alpha)(\theta+\alpha)}{(1+\theta)(2+\theta)} \label{q2at}\\
q_3(\alpha,\theta)&=\frac{(\theta+\alpha)(\theta+2\alpha)}{(1+\theta)(2+\theta)} \label{q3at}
\end{align} 
where 
\begin{equation}
q_i(\alpha,\theta):=\PP_{\alpha,\theta}(K_3=i).
\end{equation}

For $m>0$, let
\begin{equation}
A_m:=\big\{(m+m\theta,\theta):-\tfrac{m}{m+1}<\theta<\tfrac{1-m}{m}\big\}\subseteq\{(\alpha,\theta):0\leq\alpha<1,\theta>-\alpha\}
\end{equation}
and let $A_0:=\{(0,\theta):\theta>0\}$, the parameter subspace corresponding to the well-known one-parameter Ewens sampling formula \cite{MR325177}.
The line segments and one ray $\{A_m\}_{m\geq 0}$ with inverse slope $m$ in the $(\alpha,\theta)$ plane, each of which would pass through the point $(\alpha,\theta)=(0,-1)$ if extended, partition the parameter subspace $\{(\alpha,\theta):0\leq \alpha<1,\theta>-\alpha\}$. Hence the distribution of $K_3$ can be reparametrized in $m$ and $\theta$ as
\begin{align}
q_1^{(m)}(\theta)&=\frac{(1-m-m\theta)(2-m-m\theta)}{(1+\theta)(2+\theta)} \label{q1m}\\ 
q_2^{(m)}(\theta)&=\frac{3(1-m-m\theta)[m+(m+1)\theta]}{(1+\theta)(2+\theta)} \label{q2m}\\
q_3^{(m)}(\theta)&=\frac{[m+(m+1)\theta][2m+(2m+1)\theta]}{(1+\theta)(2+\theta)} \label{q3m}
\end{align} 
It can be checked by calculus that for each fixed $m>0$, 
\begin{itemize}
\item the function $q_1^{(m)}(\theta)$ is strictly decreasing for $\theta\in(-\tfrac{m}{m+1},\tfrac{1-m}{m})$ with $\lim_{\theta\rightarrow -\frac{m}{m+1}}q_1^{(m)}(\theta)=1$ and $\lim_{\theta\rightarrow\frac{1-m}{m}}q_1^{(m)}(\theta)=0$
\item the function $q_3^{(m)}(\theta)$ is strictly increasing for $\theta\in(-\tfrac{m}{m+1},\tfrac{1-m}{m})$ with $\lim_{\theta\rightarrow -\frac{m}{m+1}}q_3^{(m)}(\theta)=0$ and $\lim_{\theta\rightarrow\frac{1-m}{m}}q_3^{(m)}(\theta)=1$
\item the function $q_2^{(m)}(\theta)$ is strictly increasing on $\big(-\frac{m}{m+1},\tau(m)\big]$ and strictly decreasing on $\big[\tau(m),\frac{1-m}{m}\big)$, with a unique maximum value of $9-6\big(\sqrt{(m+1)(m+2)}-m\big)$ at $\theta=\tau(m):=\frac{-m^2-3m+\sqrt{(m+1)(m+2)}}{1+3m+m^2}$, which is also the unique value of $\theta$ in the domain at which $q_1^{(m)}(\theta)=q_3^{(m)}(\theta)$.
\end{itemize} 
The properties above also hold for $m=0$ after slight modification by replacing each instance of $\frac{1-m}{m}$ with $\lim_{m\rightarrow 0^+}\frac{1-m}{m}=\infty$, and this remark also applies to subsequent discussion.
\begin{figure}[h!]
\centering
  \centering
  \includegraphics[width=0.9\linewidth]{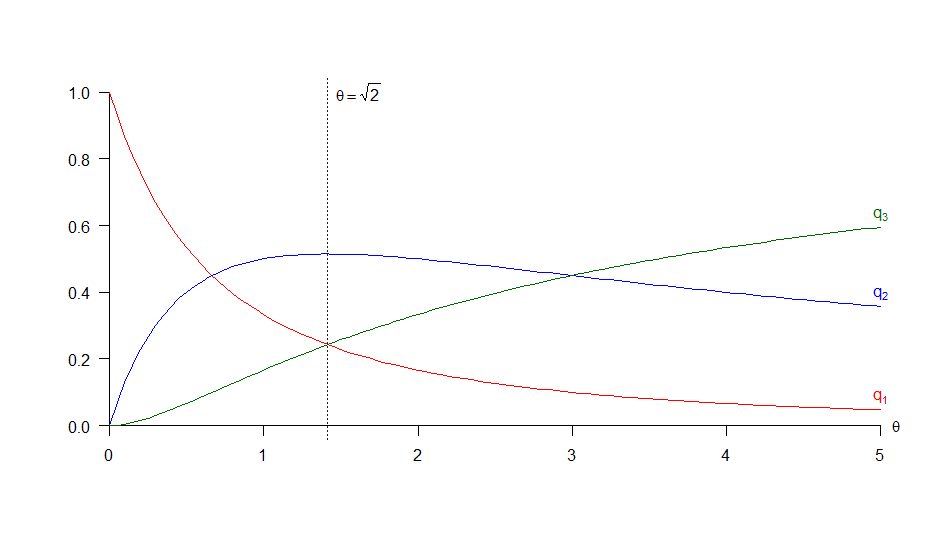}
  \captionsetup{width=0.6\linewidth}
  \captionof{figure}{Graphs of $q_i^{(m)}(\theta)$ for $m=0$ and $\theta\in[0,5]$. Observe that $q_1$ and $q_3$ intersect at the same value of $\theta$ as where $q_2$ attains its maximum value. The corresponding graphs for every $m>0$ also share this property.} 
  \label{ewensk3}
\end{figure}
\vspace{1cm}\\
\textbf{Duality.} The last observation implies that for $m\geq 0$ and any real number $p$ such that $0<p<9-6(\sqrt{(m+1)(m+2)}-m)$, there are exactly two values $\theta_{\pm}^{(m)}(p)$ with 
\begin{equation} \label{dualthetas}
-\frac{m}{m+1}<\theta_-^{(m)}(p)<\tau(m)<\theta_+^{(m)}(p)<\frac{1-m}{m}.
\end{equation}
satisfying
\begin{equation}
q_2^{(m)}(\theta_-^{(m)}(p))=q_2^{(m)}(\theta_+^{(m)}(p)).
\end{equation}
For $p=9-6(\sqrt{(m+1)(m+2)}-2)$, define $\theta_-^{(m)}(p)=\theta_+^{(m)}(p)=\varphi(m)$. As $\theta_{\pm}^{(m)}(p)$ are defined as the solutions to the equation
\begin{equation}
\frac{3(1-m-m\theta)[m+(m+1)\theta]}{(1+\theta)(2+\theta)}=p
\end{equation}
or equivalently the quadratic equation
\begin{equation} \label{defdualq}
p(1+\theta)(2+\theta)-3(1-m-m\theta)[m+(m+1)\theta]=0,
\end{equation}
we have the polynomial identity
\begin{equation}
(\theta-\theta_+^{(m)}(p))(\theta-\theta_-^{(m)}(p))=\theta^2+\frac{3p-3+6m^2}{p+3m+3m^2}\theta+\frac{2p-3m+3m^2}{p+3m+3m^2}
\end{equation}
after rearranging \eqref{defdualq}. It follows that
\begin{equation} \label{dualmult}
\theta_+^{(m)}\theta_-^{(m)}=\frac{2p-3m+3m^2}{p+3m+3m^2}.
\end{equation}
For $-\frac{m}{m+1}<\theta<\frac{1-m}{m}$, define the $m$-\textit{dual} $\theta_*^{(m)}$ of $\theta$ according to \eqref{dualthetas}. Rearranging \eqref{dualmult} and simplifying gives the explicit formula
\begin{equation} \label{dualform}
\theta_*^{(m)}=\frac{2-m(3+m)(1+\theta)}{\theta+m(3+m)(1+\theta)}.
\end{equation}
\begin{figure}[h!]
\centering
  \includegraphics[width=1\linewidth]{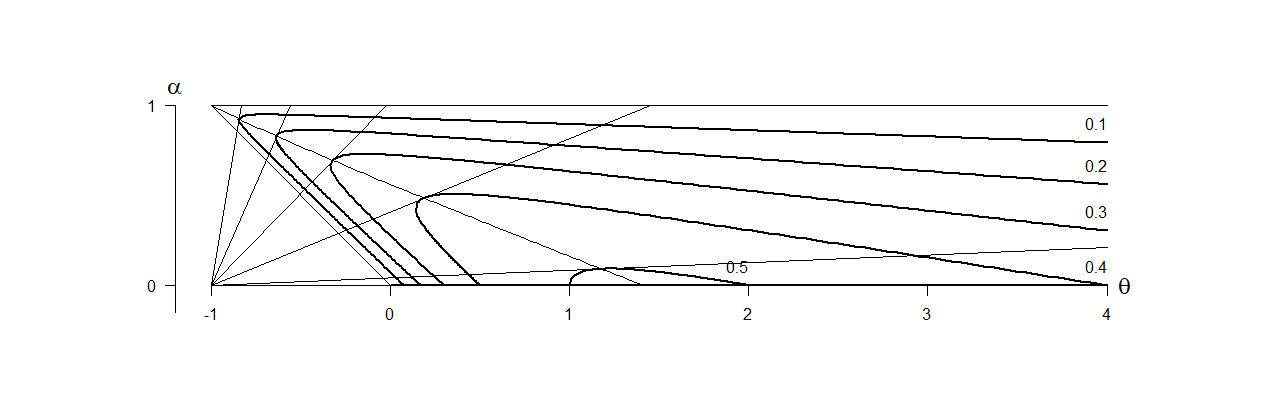}
  \captionsetup{width=0.8\linewidth}
  \vspace{-1.2cm}
  \captionof{figure}{Contour plot of $q_2(\alpha,\theta)$. The level curves for $q_2(\alpha,\theta)\in\{0.1,0.2,0.3,0.4,0.5\}$ are shown, along with their tangent lines where they meet the curve $q_1(\alpha,\theta)=q_3(\alpha,\theta)$. Observe that each tangent line passes through the point $(\alpha,\theta)=(0,-1)$. Note that here $\alpha$ is plotted on the vertical axis, for convenience of display.}
  \label{q2contour}
\end{figure}
\begin{theorem} \label{mduality}
For $m\geq 0$ and $-\frac{m}{m+1}<\theta<\frac{1-m}{m}$, we have 
\begin{equation}
q_1^{(m)}(\theta_*^{(m)})=q_3^{(m)}(\theta)\qquad\text{and}\qquad q_3^{(m)}(\theta_*^{(m)})=q_1^{(m)}(\theta).
\end{equation}
\end{theorem}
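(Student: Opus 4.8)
The plan is to prove both identities by direct substitution of the explicit dual formula \eqref{dualform} into \eqref{q1m}--\eqref{q3m}, exploiting the fact that every linear-in-$\theta_*^{(m)}$ expression occurring there factors cleanly. First I would abbreviate $c := m(3+m)$ and write the dual as $\theta_*^{(m)} = N/D$ with $N = 2 - c(1+\theta)$ and $D = \theta + c(1+\theta)$. Adding $1$ and $2$ and clearing denominators, the denominator factors of $q_1^{(m)}(\theta_*^{(m)})$ and $q_3^{(m)}(\theta_*^{(m)})$ collapse, using $2 + c = (m+1)(m+2)$, to
$$1 + \theta_*^{(m)} = \frac{2+\theta}{D}, \qquad 2 + \theta_*^{(m)} = \frac{(m+1)(m+2)(1+\theta)}{D}.$$

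The essential observation, which makes the argument mechanical, is that the numerator factors undergo the same collapse. A short computation, regrouping with $c = 3m + m^2$, gives
$$1 - m - m\theta_*^{(m)} = \frac{(m+1)\,[\,m+(m+1)\theta\,]}{D}, \qquad 2 - m - m\theta_*^{(m)} = \frac{(m+2)\,[\,2m+(2m+1)\theta\,]}{D},$$
$$m + (m+1)\theta_*^{(m)} = \frac{(m+2)\,[\,1-m-m\theta\,]}{D}, \qquad 2m + (2m+1)\theta_*^{(m)} = \frac{(m+1)\,[\,2-m-m\theta\,]}{D}.$$
Each block is $D^{-1}$ times a constant ($m+1$ or $m+2$) and exactly one of the factors that already occurs in $q_1^{(m)}(\theta)$ or $q_3^{(m)}(\theta)$; this is precisely the structure needed for the cancellation to close.

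With these in hand, the first identity follows by substitution: in $q_1^{(m)}(\theta_*^{(m)}) = \tfrac{(1-m-m\theta_*^{(m)})(2-m-m\theta_*^{(m)})}{(1+\theta_*^{(m)})(2+\theta_*^{(m)})}$ the common $D^{-2}$ cancels, the numerator constant $(m+1)(m+2)$ cancels against the $(m+1)(m+2)$ sitting in $2+\theta_*^{(m)}$, and what remains is exactly $q_3^{(m)}(\theta)$. I would carry this computation out in full and then note that the second identity, $q_3^{(m)}(\theta_*^{(m)}) = q_1^{(m)}(\theta)$, comes out the same way from the remaining two building blocks. Alternatively one can bypass the second substitution: the same blocks show immediately that $q_2^{(m)}(\theta_*^{(m)}) = q_2^{(m)}(\theta)$ (as expected from the defining property of the dual), whereupon the constraint $q_1^{(m)}+q_2^{(m)}+q_3^{(m)}\equiv 1$ upgrades the first identity to the second; a third option is to check the one-line consequence $(\theta_*^{(m)})_*^{(m)} = \theta$ and apply the first identity with $\theta$ replaced by $\theta_*^{(m)}$.

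Since the argument is purely computational there is no conceptual obstacle; the only real work is bookkeeping, and the main point to get right is the regrouping that produces the clean factorizations above — in particular the identity $2 + c = (m+1)(m+2)$ and the matching of the constants $m+1$ and $m+2$ between numerator and denominator, without which the cancellation would not collapse to $q_1$ and $q_3$. The boundary case $m = 0$ is covered by the same formulas under the stated convention $\tfrac{1-m}{m} = \infty$, with $c = 0$ and $\theta_*^{(0)} = 2/\theta$.
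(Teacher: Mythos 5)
Your proposal is correct and follows essentially the same route as the paper: direct substitution of \eqref{dualform} into the formulas for $q_i^{(m)}$, using exactly the same four factorization identities for $1+\theta_*^{(m)}$, $2+\theta_*^{(m)}$, $1-m-m\theta_*^{(m)}$, and $2-m-m\theta_*^{(m)}$ (the paper then dispatches the second identity by noting the dual map is an involution, which is one of the three closing options you list). All your algebraic blocks check out, so the only difference is that you also compute the two extra blocks needed for a fully direct verification of the second identity, which the paper skips.
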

\begin{proof}
It suffices to verify the first of the two identities since \eqref{dualform} is constructed as an involution. Let $D(m,\theta)$ be the denominator in \eqref{dualform}. Substituting and simplifying yields
\begin{align}
1+\theta^{(m)}_*&=\frac{2+\theta}{D(m,\theta)}\ ;\\
2+\theta^{(m)}_*&=\frac{(1+\theta)(1+m)(2+m)}{D(m,\theta)}\ ;\\
1-m-m\theta^{(m)}_*&=\frac{(1+m)[m+(m+1)\theta]}{D(m,\theta)}\ ;\\
2-m-m\theta^{(m)}_*&=\frac{(2+m)[2m+(2m+1)\theta]}{D(m,\theta)}.
\end{align}
Hence we have
\begin{equation}
q_1^{(m)}(\theta_*^{(m)})=\frac{(1-m-m\theta_*^{(m)})(2-m-m\theta_*^{(m)})}{(1+\theta_*^{(m)})(2+\theta_*^{(m)})}=\frac{[m+(m+1)\theta][2m+(2m+1)\theta]}{(1+\theta)(2+\theta)}=q_3^{(m)}(\theta)
\end{equation}
as desired.\\
\end{proof}
\textbf{Symmetry.} A consequence of Theorem \ref{mduality} is a surprising symmetry in the set of laws of $K_3$ arising from the two-parameter model. To make this observation explicit, for any $m\geq 0$ we solve for $q_3=q_3^{(m)}$ in terms of $q_1=q_1^{(m)}$ as defined in \eqref{q1m} and \eqref{q3m} to obtain the formula 
\begin{equation} \label{q1q3m}
q_3=\varphi_m(q_1):=1+\frac{3}{4}m+\frac{5}{4}q_1-\frac{3}{4}\sqrt{m^2+6q_1m+q_1(8+q_1)}.
\end{equation}
Rearranging to eliminate the radical yields the relation
\begin{equation}
(4+3m)(q_1+q_3)+5q_1q_3-2(q_1^2+q_3^2)-2-3m=0
\end{equation}
which verifies the symmetry. For $m=0$ the identity reduces to
\begin{equation} \label{hqq}
h(q_1,q_3):=4(q_1+q_3)+5q_1q_3-2(q_1^2+q_3^2)-2=0.
\end{equation}
\begin{theorem} \label{bijthm}
The mapping $(\alpha,\theta)\mapsto(q_1,q_3)$ defined by \eqref{q1at} and \eqref{q3at} is a bijection between the regions 
\begin{equation}
\{(\alpha,\theta):0\leq\alpha<1,\ \theta>-\alpha\}\qquad\text{and}\qquad\{(q_1,q_3):h(q_1,q_3)\geq 0,\ q_1+q_3<1\}
\end{equation}
where $h(q_1,q_3)$ is defined as in \eqref{hqq}.
\end{theorem}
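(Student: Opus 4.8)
The plan is to exploit the foliation of the parameter region $\{(\alpha,\theta):0\le\alpha<1,\ \theta>-\alpha\}$ by the lines $A_m$, $m\ge 0$. Every such $(\alpha,\theta)$ lies on exactly one $A_m$, namely the one with $m=\alpha/(1+\theta)$ (here $1+\theta>0$ because $\theta>-\alpha>-1$), and on $A_m$ the parameter $\theta$ sweeps the open interval $I_m:=(-\tfrac{m}{m+1},\tfrac{1-m}{m})$; in these coordinates the map is $\theta\mapsto\big(q_1^{(m)}(\theta),q_3^{(m)}(\theta)\big)$ as in \eqref{q1m} and \eqref{q3m}. The key algebraic input is that the relation obtained by rearranging \eqref{q1q3m} --- displayed just before \eqref{hqq} --- is exactly $h(q_1,q_3)=3m(1-q_1-q_3)$. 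Since $q_2^{(m)}(\theta)=1-q_1-q_3>0$ throughout the interior of $I_m$, this simultaneously shows the image lands in the target set (as $m\ge 0$ forces $h\ge 0$ and $q_1+q_3<1$) and lets us recover the line index from the image point by
\begin{equation}
m=\frac{h(q_1,q_3)}{3(1-q_1-q_3)}.
\end{equation}

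First I would prove injectivity. If $(\alpha_1,\theta_1)$ and $(\alpha_2,\theta_2)$ share an image $(q_1,q_3)$, the displayed formula returns the same $m$ for both, so both lie on the single line $A_m$. On $A_m$ the function $q_1^{(m)}$ is strictly decreasing on $I_m$ (checked by calculus earlier), so $q_1^{(m)}(\theta_1)=q_1^{(m)}(\theta_2)$ gives $\theta_1=\theta_2$, and then $\alpha_i=m(1+\theta_i)$ forces $\alpha_1=\alpha_2$.

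For surjectivity, take any $(q_1,q_3)$ with $h(q_1,q_3)\ge 0$ and $q_1+q_3<1$, and set $m:=h(q_1,q_3)/\big(3(1-q_1-q_3)\big)\ge 0$. By construction the pair satisfies the rearranged relation, hence $q_3$ is a root of the quadratic equivalent to $q_3=\varphi_m(q_1)$, whose two solutions are $1+\tfrac34 m+\tfrac54 q_1\mp\tfrac34\sqrt{m^2+6q_1m+q_1(8+q_1)}$. The ``$+$'' branch is always $\ge 1$ when $m,q_1\ge 0$, forcing $q_1+q_3\ge 1$, which is excluded; so necessarily $q_3=\varphi_m(q_1)$, the minus branch. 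Because $q_1^{(m)}$ maps $I_m$ monotonically onto $(0,1)$ and an interior target point has $q_1\in(0,1)$, there is a unique $\theta\in I_m$ with $q_1^{(m)}(\theta)=q_1$; then $q_3^{(m)}(\theta)=\varphi_m\big(q_1^{(m)}(\theta)\big)=\varphi_m(q_1)=q_3$, and $\alpha:=m(1+\theta)$ is the desired preimage.

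I expect the genuine work to be the boundary bookkeeping rather than the interior mechanism. One must verify that the target set lies in the first quadrant (so that $q_1\ge 0$, which the branch dichotomy uses), that interior target points really have $q_1\in(0,1)$ so the monotone inverse lands strictly inside $I_m$, that $q_1+q_3\to 1$ corresponds correctly to $m\to\infty$, and that the two points where the curve $h=0$ meets the line $q_1+q_3=1$, namely $(1,0)$ and $(0,1)$, are simultaneously excluded from the target (they have $q_1+q_3=1$) and from the image (they are the excluded limits $\theta\to\partial I_m$). The branch estimate $q_3^{(+)}\ge 1$ and the strict positivity of $q_2$ in the interior are the two facts that make this bookkeeping go through.
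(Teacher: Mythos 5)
Your proof is correct and, while it shares the paper's scaffolding, it resolves the crucial step by a genuinely different mechanism. Both arguments use the foliation of $\{(\alpha,\theta):0\leq\alpha<1,\ \theta>-\alpha\}$ by the lines $A_m$, the calculus facts that $q_1^{(m)}$ is a continuous, strictly decreasing map of $(-\tfrac{m}{m+1},\tfrac{1-m}{m})$ onto $(0,1)$, and the representation $q_3^{(m)}=\varphi_m(q_1^{(m)})$ of \eqref{q1q3m}. They differ in how the line index $m$ is pinned down. The paper works slice by slice: for fixed $q_1\in(0,1)$ it computes $\tfrac{\partial}{\partial m}\varphi(m,q_1)>0$ and $\lim_{m\to\infty}\varphi(m,q_1)=1-q_1$, so that as $m$ runs over $[0,\infty)$ the curves sweep each vertical slice of the target exactly once (with surjectivity resting implicitly on continuity of $\varphi$ in $m$ and the intermediate value theorem). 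You instead observe that the relation displayed just before \eqref{hqq} is precisely $h(q_1,q_3)=3m(1-q_1-q_3)=3m\,q_2$, which lets you solve for $m$ outright from the image point; this gives injectivity across lines with no calculus in the $m$-direction, and for surjectivity replaces monotonicity-plus-limit by the quadratic branch dichotomy (the ``$+$'' root is $\geq 1$, hence excluded in the target). Your identity is correct --- expanding $(4+3m)(q_1+q_3)+5q_1q_3-2(q_1^2+q_3^2)-2-3m=0$ indeed gives $h=3m(1-q_1-q_3)$ --- and your route is the more algebraic one: it prefigures the paper's subsequent ``Explicit inverse'' formulas, in which the numerator of $\alpha(q_1,q_3)$ is exactly $h(q_1,q_3)$, consistent with $m=\alpha/(1+\theta)$. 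What the paper's route buys in exchange is the nesting/monotone ordering of the curves $\varphi_m$ as $m$ grows, which is the geometric picture behind its figure for Theorem \ref{bijthm}.

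One caveat on the bookkeeping you deferred: the verification that ``the target set lies in the first quadrant'' fails verbatim. The quadratic part of $h$ is indefinite, so $\{h\geq 0\}$ is a hyperbolic region with two components, and the far component meets $\{q_1+q_3<1\}$ (for instance $h(-10,-10)=18$). Thus the theorem as literally stated needs the target to be read as the component meeting the probability square, i.e.\ with $q_1,q_3\geq 0$ adjoined --- a restriction the paper's own proof also makes silently by quantifying over $0<q_1<1$. Under that reading your remaining checks do go through: $h(0,q_3)=-2(q_3-1)^2$ and $h(q_1,0)=-2(q_1-1)^2$ show every target point has $0<q_1<1$ (and $0<q_3$), so the branch dichotomy and the monotone inverse on $(-\tfrac{m}{m+1},\tfrac{1-m}{m})$ apply, and your argument is complete.
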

\begin{proof}
Consider $\varphi(m,q_1):=\varphi_m(q_1)$ as in \eqref{q1q3m}. To show the desired bijection, it suffices to show that for every fixed $0<q_1<1$ that (i) $\varphi(m,q_1)$ is increasing in $m$, and (ii) $\lim_{m\rightarrow\infty}\varphi(m,q_1)=1-q_1$.\\\\
(i) 
\begin{equation}
\frac{\partial}{\partial m}\varphi(m,q_1)=\frac{3}{4}(1-\frac{2m+6q_1}{2\sqrt{m^2+6q_1m+q_1(8+q_1)}})>\frac{3}{4}(1-\frac{2m+6q_1}{2\sqrt{m^2+6q_1m+9q_1^2}})=0
\end{equation}
(ii) 
\begin{align}
\lim_{m\rightarrow\infty}\varphi(m,q_1)&=\lim_{m\rightarrow\infty}1+\frac{5}{4}q_1+\frac{3}{4}\bigg(\frac{m^2-(m^2+6q_1m+q_1(8+q_1))}{m+\sqrt{m^2+6q_1m+q_1(8+q_1)}}\bigg)\\
&=\lim_{m\rightarrow\infty}1+\frac{5}{4}q_1+\frac{3}{4}\bigg(\frac{-6q_1-\frac{q_1(8+q_1)}{m}}{1+\sqrt{1+\frac{6q_1}{m}+\frac{q_1(8+q_1)}{m^2}}}\bigg)\\
&=1-q_1
\end{align}
\end{proof}
\begin{figure}[h!]
\vspace{-0.5cm}
\centering
\begin{minipage}{0.35\textwidth}
  \centering
  \includegraphics[width=0.545\linewidth]{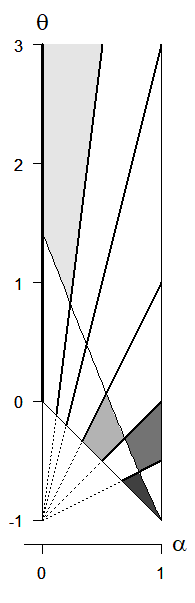}
  \label{atcuts}
\end{minipage}%
\begin{minipage}{0.7\textwidth}
  \centering
  \includegraphics[width=1\linewidth]{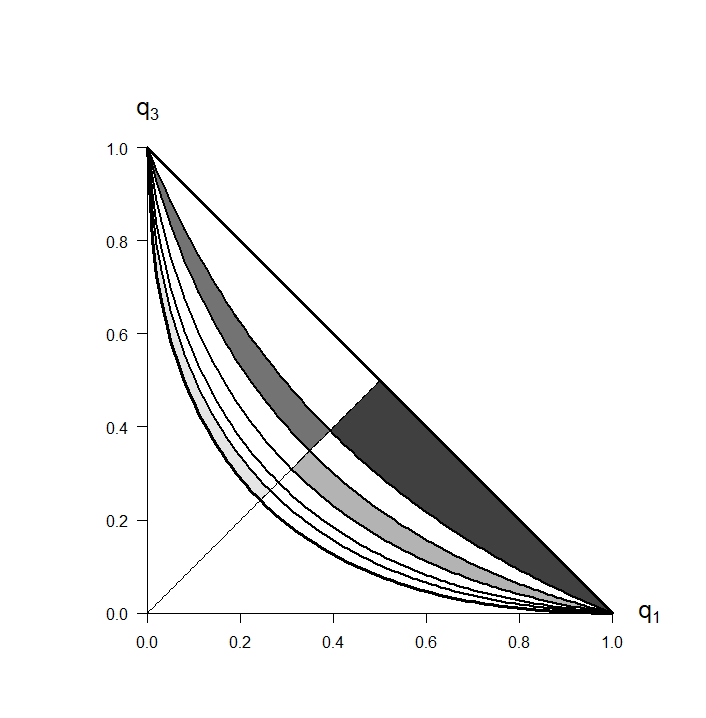}
  \label{q1q3cuts}
\end{minipage}
\captionsetup{width=1\linewidth}
\vspace{-1cm}
  \captionof{figure}{The bijection of Theorem \ref{bijthm}. The regions colored in different shades of gray reveal the geometry of the bijection.}
\end{figure}

\textbf{Explicit inverse.} Define the ratios
\begin{equation}
r(\alpha,\theta):=\frac{q_1(\alpha,\theta)}{q_2(\alpha,\theta)}=\frac{2-\alpha}{3(\theta+\alpha)},\qquad s(\alpha,\theta):=\frac{q_2(\alpha,\theta)}{q_3(\alpha,\theta)}=\frac{3(1-\alpha)}{(\theta+2\alpha)}
\end{equation}
These ratios uniquely define the law of $K_3$ for the corresponding $(\alpha,\theta)$. The map $(\theta,\alpha)\mapsto(r,s)$ can be explicitly inverted as 
\begin{equation}
\alpha(r,s)=\frac{9r-2s}{9r-s+3rs},\qquad\theta(r,s)=\frac{3-9r+4s}{9r-s+3rs}
\end{equation}
Expressed in terms of $q_1$ and $q_3$, this gives the inversion formulas
\begin{equation}
\alpha(q_1,q_3)=\frac{4q_1+4q_3+5q_1q_3-2q_1^2-2q_3^2-2}{5q_1+2q_3+4q_1q_3-4q_1^2-q_3^2-1},\qquad
\theta(q_1,q_3)=-\frac{8q_1+5q_3+4q_1q_3-4q_1^2-q_3^2-4}{5q_1+2q_3+4q_1q_3-4q_1^2-q_3^2-1}
\end{equation}
Note that the numerator in the formula for $\alpha(q_1,q_3)$ is equal to $h(q_1,q_3)$ as defined in \eqref{hqq}. It is easy to verify that these formulas give an algebraic inverse. Observe that the denominator which is the same in both formulas is nonvanishing on the region $\{(q_1,q_3):h(q_1,q_3)\geq0,\ q_1+q_3<1\}$, since 
\begin{align}
2(5q_1+2q_3+4q_1q_3-4q_1^2-q_3^2-1)&=h(q_1,q_3)+6q_1-6q_1^2+3q_1q_3>0.
\end{align}
\begin{corollary}
For any parameters $(\alpha,\theta)$ with $0\leq\alpha<1$ and $\theta>-\alpha$, there exists a unique pair $(\alpha_*,\theta_*)$ with $0\leq\alpha_*<1$ and $\theta_*>-\alpha_*$ such that 
\begin{equation}
q_{2\pm 1}(\alpha,\theta)=q_{2\mp 1}(\alpha_*,\theta_*).
\end{equation}
\end{corollary}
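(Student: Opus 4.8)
The plan is to read the shorthand $q_{2\pm1}(\alpha,\theta)=q_{2\mp1}(\alpha_*,\theta_*)$ as the pair of identities $q_3(\alpha,\theta)=q_1(\alpha_*,\theta_*)$ and $q_1(\alpha,\theta)=q_3(\alpha_*,\theta_*)$; since $q_2=1-q_1-q_3$, these automatically force $q_2(\alpha,\theta)=q_2(\alpha_*,\theta_*)$. So the corollary asks for a well-defined map on the parameter region that interchanges the two coordinates $q_1$ and $q_3$ of the law of $K_3$, and I would construct it by transporting the coordinate swap $\sigma\colon(q_1,q_3)\mapsto(q_3,q_1)$ through the bijection $\Phi$ of Theorem \ref{bijthm}.

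The single fact that makes this immediate is that the target region $R:=\{(q_1,q_3):h(q_1,q_3)\geq 0,\ q_1+q_3<1\}$ is invariant under $\sigma$. Indeed, inspecting \eqref{hqq} shows that $h$ is a symmetric function of its two arguments, and the constraint $q_1+q_3<1$ is manifestly symmetric as well; hence $\sigma$ restricts to an involution of $R$, with both the boundary case $h=0$ and the strict inequality $q_1+q_3<1$ preserved. First I would record this symmetry, then define
\[
(\alpha_*,\theta_*):=\big(\Phi^{-1}\circ\sigma\circ\Phi\big)(\alpha,\theta).
\]
This lands back in $\{(\alpha,\theta):0\leq\alpha<1,\ \theta>-\alpha\}$ because $\Phi$ maps that region bijectively onto $R$ and $\sigma$ preserves $R$. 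By construction $\Phi(\alpha_*,\theta_*)=\sigma\big(\Phi(\alpha,\theta)\big)=\big(q_3(\alpha,\theta),q_1(\alpha,\theta)\big)$, which is exactly the desired pair of identities, and uniqueness of $(\alpha_*,\theta_*)$ is immediate from the injectivity of $\Phi$.

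If an explicit description is wanted, I would instead compose $\sigma$ with the inversion formulas for $\alpha(q_1,q_3)$ and $\theta(q_1,q_3)$ stated just before the corollary, simply swapping $q_1$ and $q_3$ throughout; the numerator $h(q_1,q_3)$ appearing in $\alpha$ is unchanged by the swap, consistent with the symmetry, while the common denominator and the numerator of $\theta$ transform accordingly. I do not expect a genuine obstacle: all of the analytic content---the monotonicity in $m$ and the limit computation---was already absorbed into Theorem \ref{bijthm}, and the corollary reduces to the symmetry of $h$. The only point requiring a moment's care is confirming that $\sigma$ carries $R$ exactly onto itself, which follows directly from the symmetry of both defining relations; as a bonus, conjugating the involution $\sigma$ shows that $(\alpha,\theta)\mapsto(\alpha_*,\theta_*)$ is itself an involution.
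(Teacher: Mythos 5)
Your proposal is correct and matches the paper's intended argument: the corollary is stated as a consequence of Theorem \ref{bijthm}, with existence and uniqueness following precisely because the image region $\{(q_1,q_3):h(q_1,q_3)\geq 0,\ q_1+q_3<1\}$ is invariant under swapping $q_1\leftrightarrow q_3$ (symmetry of $h$ having been noted in the paper) and the map $(\alpha,\theta)\mapsto(q_1,q_3)$ is a bijection onto it. Your closing remark about obtaining explicit formulas by composing the swap with the inversion formulas is exactly how the paper produces its expressions for $\alpha_*$ and $\theta_*$.
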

Explicit formulas for $\alpha_*$ and $\theta_*$ in terms of $\alpha$ and $\theta$ can be computed as
\begin{align}
\alpha^*&=\frac{(2-3\alpha)(1+\theta)-\alpha^2}{(\theta+3\alpha)(1+\theta)+\alpha^2}\\
\theta^*&=\frac{\alpha(2+\theta)}{(\theta+3\alpha)(1+\theta)+\alpha^2}.
\end{align} 
\textbf{Exceptional parameters.} $\alpha<0$, $\theta=-m\alpha$ for some $m\in\N$\\\\
It is well-known that in this case, the exchangeable random partition $(\Pi_n)$ of $\N$ generated according to the Chinese restaurant construction is distributed as if by sampling from a symmetric Dirichlet distribution with $m$ parameters equal to $-\alpha$ \cite{MR2245368}. Hence for fixed $m\in\N$, as $\alpha\downarrow -\infty$ the exchangeable random partition of $\N$ corresponding to the parameter pair $(\alpha,\theta)=(\alpha,-m\alpha)$ converges in distribution to that obtained by sampling from the discrete uniform distribution on $m$ elements. For $K_3$, the $(\alpha,\theta)$ to $(q_1,q_3)$ correspondence can be seen in Figure \ref{dirichlet}.\\
\vspace{-0.6cm}
\begin{figure}[h!]
\centering
  \includegraphics[width=0.7\linewidth]{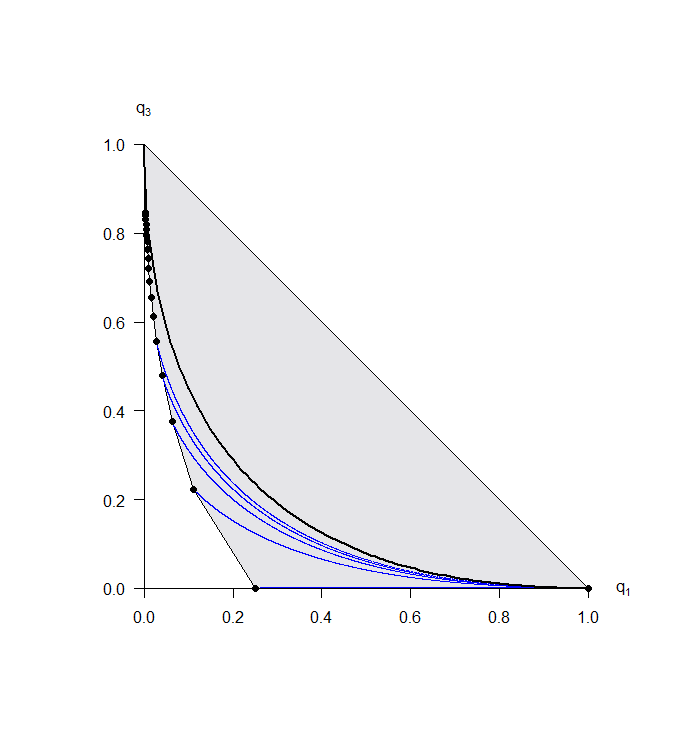}
  \captionsetup{width=0.6\linewidth}
  \vspace{-1.7cm}
  \captionof{figure}{The blue curves correspond to the images of $(\alpha,\theta)=(\alpha,-m\alpha)$ for $\alpha\in(-\infty,0)$ and fixed $m$ under the $(\alpha,\theta)\mapsto(q_1,q_3)$ map, for $m=2,3,4,5,6$. The curve defined by \eqref{hqq} is included in black.}
  \label{dirichlet}
\end{figure}

\newpage

\section{Complements} \label{looseends}
In this section, we point out an interesting convexity property for the the law of $K_3$. With notation as in Section \ref{k3main}, for $\boldsymbol{p}\in\nabla_\infty$, let
\begin{equation}
\boldsymbol{Q}(\boldsymbol{p}):=\big(q_1(\boldsymbol{p}),q_3(\boldsymbol{p})\big)
\end{equation}
be the mapping from a ranked discrete distribution to its corresponding law of $K_3$ obtained by i.i.d. sampling. In Section \ref{k3main} we proved that the range of $\boldsymbol{Q}$ is a subset of the closed convex hull of the set of points $\{\boldsymbol{Q}(\boldsymbol{u}_N):N\in\N\}$. Here are some preliminary efforts to better understand the geometry of this mapping.
\begin{proposition} For any $0\leq\lambda\leq 1$ and $N\geq 1$,
\begin{equation}
\boldsymbol{Q}(\lambda\boldsymbol{u}_N+(1-\lambda)\boldsymbol{u}_{2N})=\lambda^2\boldsymbol{Q}(\boldsymbol{u}_N)+(1-\lambda^2)\boldsymbol{Q}(\boldsymbol{u}_{2N})
\end{equation}
\end{proposition}
\begin{proof} We have
\begin{equation}
\lambda\boldsymbol{u}_N+(1-\lambda)\boldsymbol{u}_{2N}=\big(\underbrace{\tfrac{1+\lambda}{2N},\ldots,\tfrac{1+\lambda}{2N}}_{N\text{ times}},\underbrace{\tfrac{1-\lambda}{2N},\ldots,\tfrac{1-\lambda}{2N}}_{N\text{ times}}\big).
\end{equation}
Hence
\begin{equation}
q_1(\lambda\boldsymbol{u}_N+(1-\lambda)\boldsymbol{u}_{2N})=N\Big(\frac{1+\lambda}{2N}\Big)^3+N\Big(\frac{1-\lambda}{2N}\Big)^3=\frac{1+3\lambda^2}{4N^2}
\end{equation}
and
\begin{align}
q_3&(\lambda\boldsymbol{u}_N+(1-\lambda)\boldsymbol{u}_{2N})\\&=\binom{N}{3}\Big(\frac{1+\lambda}{2N}\Big)^3+\binom{N}{2}N\Big(\frac{1+\lambda}{2N}\Big)^2\Big(\frac{1-\lambda}{2N}\Big)+N\binom{N}{2}\Big(\frac{1+\lambda}{2N}\Big)\Big(\frac{1-\lambda}{2N}\Big)^2+\binom{N}{3}\Big(\frac{1-\lambda}{2N}\Big)^3\\
&=\binom{N}{3}\frac{1+3\lambda^2}{4N^3}+N\binom{N}{2}\frac{1-\lambda^2}{4N^3}\\
&=\frac{N-1}{3}\Big(\frac{2N-1-3\lambda^2}{4N^2}\Big).
\end{align}
On the other side,
\begin{equation}
\lambda^2 q_1(\boldsymbol{u}_N)+(1-\lambda^2)q_1(\boldsymbol{u}_{2N})=\frac{\lambda^2}{N^2}+\frac{1-\lambda^2}{4N^2}=\frac{1+3\lambda^2}{4N^2}
\end{equation}
and
\begin{align}
\lambda^2 q_3(\boldsymbol{u}_N)+(1-\lambda^2)q_3(\boldsymbol{u}_{2N})&=\lambda^2\binom{N}{3}\frac{1}{N^3}+(1-\lambda^2)\binom{2N}{3}\frac{1}{8N^3}\\
&=\frac{N(N-1)(N-2)}{6}\cdot\frac{\lambda^2}{N^3}+\frac{2N(2N-1)(2N-2)}{6}\cdot\frac{1-\lambda^2}{8N^3}\\
&=\frac{N-1}{3}\Big(\frac{2N-1-3\lambda^2}{4N^2}\Big).
\end{align}
\end{proof}
\vspace{1cm}
\textbf{Acknowledgement.} Many thanks to my advisor Jim Pitman for suggesting this problem and providing invaluable guidance.

\newpage

\bibliography{k3}
\bibliographystyle{plain}
\nocite{*}
\vspace{1cm}
Department of Mathematics, University of California, Berkeley.\\
E-mail: \href{tdz@berkeley.edu}{tdz@berkeley.edu}

\end{document}